\documentclass[12pt]{article}
\usepackage[a4paper]{geometry}
\usepackage{amssymb,amsmath,amsthm}
\usepackage{graphicx,cite,xcolor}
\usepackage{longtable,pdflscape,booktabs,caption,multicol}
\usepackage{multirow}
\usepackage[colorlinks=true,citecolor=black,linkcolor=black,urlcolor=blue]{hyperref}

\theoremstyle{plain}
\newtheorem{theorem}{Theorem}
\newtheorem{lemma}[theorem]{Lemma}
\newtheorem{proposition}[theorem]{Proposition}
\newtheorem{corollary}[theorem]{Corollary}

\newcommand{\arxiv}[1]{\href{https://arxiv.org/abs/#1}{\texttt{arXiv:#1}}}

\title{On the nullities of quartic circulant graphs\\ and their extremal null spaces}

\author{Ivan Damnjanovi\'c\thanks{The author is supported by Diffine LLC.}\\
\small University of Ni\v s, Faculty of Electronic Engineering,\\[-0.4ex]
\small Aleksandra Medvedeva 14, 18106 Ni\v s, Serbia\\[-0.4ex]
\small\tt ivan.damnjanovic@elfak.ni.ac.rs\\
\small Diffine LLC\\[-0.4ex]
\small 3681 Villa Terrace, San Diego, CA 92104, USA\\[-0.4ex]
\small\tt ivan@diffine.com}

\begin{document}

\maketitle

\begin{abstract}
A circulant graph is a simple graph whose adjacency matrix can be represented in the form of a circulant matrix, while a nut graph is considered to be a graph whose null space is spanned by a single full vector. In a previous study by Damnjanovi\'c [\arxiv{2212.03026}, 2022], the complete set of all the pairs $(n, d)$ for which there exists a $d$-regular circulant nut graph of order $n$ has been determined. Motivated by the said results, we put our focus on the quartic circulant graphs and derive an explicit formula for computing their nullities. Furthermore, we implement the aforementioned formula in order to obtain a method for inspecting the singularity of a particular quartic circulant graph and find the concise criteria to be used for testing whether such a graph is a nut graph. Subsequently, we compute the minimum and maximum nullity that a quartic circulant graph of a fixed order $n$ can attain, for each viable order $n \ge 5$. Finally, we determine all the graphs attaining these nullities and then provide a full characterization of all of their corresponding extremal null spaces.

\bigskip\noindent
{\bf Mathematics Subject Classification:} 05C50, 05C35, 11D04, 11A05.\\
{\bf Keywords:} circulant graph, quartic graph, nut graph, singular graph, \linebreak adjacency matrix, null space, nullity.
\end{abstract}

\section{Introduction}

In this paper we will consider all graphs to be undirected, finite, simple and non-null. Thus, every graph will have at least one vertex and there shall be no loops or multiple edges. As usually done so in spectral graph theory, when talking about the spectral properties of a graph, we shall exclusively refer to the corresponding spectral properties of its adjacency matrix. Also, we shall use $\mathcal{N}(G)$ to denote the null space of the graph $G$ and $\eta(G)$ to signify its nullity. Finally, for convenience, we will take that each graph of order $n$ has the vertex set $\{0, 1, 2, \ldots, n-1\}$.

A singular graph is said to be a graph whose nullity is positive and a core graph represents a singular graph whose null space contains a full vector, i.e.\ a vector without zero elements. Furthermore, a special type of a core graph is a nut graph, which is defined as a core graph all of whose non-zero null space vectors are full, or, alternatively, as a core graph of nullity one. The chemical justification for studying such graphs is disclosed in various papers --- see, for example, \cite{Fowler1, Coolsaet, Fowler2}.

Now, we will consider the graph $G$ to be a circulant graph if its adjacency matrix $A_G$ has the form
\[
A_G = \begin{bmatrix}
    a_0 & a_1 & a_2 & \cdots & a_{n-1}\\
    a_{n-1} & a_0 & a_1 & \cdots & a_{n-2}\\
    a_{n-2} & a_{n-1} & a_0 & \cdots & a_{n-3}\\
    \vdots & \vdots & \vdots & \ddots & \vdots\\
    a_1 & a_2 & a_3 & \dots & a_0
\end{bmatrix}.
\]
Here, we clearly have $a_0 = 0$, as well as $a_j = a_{n-j}$ for all the $j = \overline{1, n-1}$. A concise way of describing a circulant graph is by taking into consideration the set of all the values $1 \le j \le \frac{n}{2}$ for which $a_j = a_{n-j} = 1$. We shall refer to this set as the generator set of a circulant graph and we will use $\mathrm{Circ}(n, S)$ to denote the circulant graph of order $n$ whose generator set is $S$.

The circulant graphs possess a key property that they are core if and only if they are singular, as we shall soon demonstrate. On top of that, these graphs are nut if and only if their nullity is exactly one, as demonstrated by Damnjanovi\'c and Stevanovi\'c \cite[Remark~5]{Damnjanovic1}. For these reasons, it makes sense to perform a more detailed study on the topic of circulant graphs. In fact, all the pairs $(n, d)$ for which there exists a $d$-regular circulant nut graph of order $n$ have been fully determined by Damnjanovi\'c \cite[Theorem 5]{Damnjanovic3}.

In a recent private communication, Toma\v{z} Pisanski suggested that it would be useful to obtain a full characterization of all the circulant nut graphs. Motivated by this comment, the primary goal of the given paper is to make a detailed analysis of the null spaces and nullities of all the quartic circulant graph. An additional purpose of this study is to obtain results that could provide insight and open the door to possible further conclusions regarding the circulant graphs of higher degrees as well.

The paper will start off by quickly showing the simple fact that each singular circulant graph must be a core graph. Afterwards, it will focus on providing an answer to the following four questions:
\begin{itemize}
    \item What is the nullity of a given arbitrarily chosen quartic circulant graph?
    \item When is a quartic circulant graph singular? When is it nut?
    \item For a given order $n \ge 5$, what is the minimum nullity that a quartic circulant graph of order $n$ can have? Which graphs attain the said nullity and what are their corresponding null spaces? What happens when we restrict ourselves only to connected graphs?
    \item For a given order $n \ge 5$, what is the maximum nullity that a quartic circulant graph of order $n$ can have? Which graphs attain the said nullity and what are their corresponding null spaces? What happens when we restrict ourselves only to connected graphs?
\end{itemize}

It is straightforward to notice that any quartic circulant graph must be representable as $\mathrm{Circ}(n, \{p, q\})$ for some $n \ge 5$ and $1 \le p < q < \frac{n}{2}$. Bearing this in mind, the remainder of the paper will be organized as follows. Section \ref{sc_preliminaries} shall serve to preview certain theoretical facts regarding the circulant graphs that will be used later on while proving the main results. In Section~\ref{sc_the_formula} we will derive an explicit formula for $\eta(\mathrm{Circ}(n, \{p, q\}))$ in terms of $n, p, q$ and then find the precise conditions that these parameters need to satisfy in order for $\mathrm{Circ}(n, \{p, q\}))$ to be singular and to be nut. Subsequently, we shall use Section \ref{sc_nullity_min} to deal with the aforementioned minimum nullity problem, both for all the quartic circulant graphs and for the connected ones only. Finally, we will provide a full solution to the maximum nullity problem in Section \ref{sc_nullity_max}.

\section{Preliminaries}\label{sc_preliminaries}

It is known from elementary linear algebra theory (see, for example, \cite[Section~3.1]{Gray}) that the circulant matrix
\[
A = \begin{bmatrix}
    a_0 & a_1 & a_2 & \cdots & a_{n-1}\\
    a_{n-1} & a_0 & a_1 & \cdots & a_{n-2}\\
    a_{n-2} & a_{n-1} & a_0 & \cdots & a_{n-3}\\
    \vdots & \vdots & \vdots & \ddots & \vdots\\
    a_1 & a_2 & a_3 & \dots & a_0
\end{bmatrix}
\]
must have the complex eigenvectors $\begin{bmatrix} 1 & \zeta & \zeta^2 & \cdots & \zeta^{n-1} \end{bmatrix}^\mathrm{T}$ corresponding to the eigenvalues $P(\zeta)$, respectively, as $\zeta$ ranges over the $n$-th roots of unity, where
\begin{equation}\label{general_circulant_p}
    P(x) = a_0 + a_1 x + a_2 x^2 + \cdots + a_{n-1} x^{n-1}.
\end{equation}
From here, it becomes easy to obtain the next proposition regarding the core property of circulant graphs.
\begin{proposition}\label{core_property}
    A circulant graph is core if and only if it is singular.
\end{proposition}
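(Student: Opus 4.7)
The plan is to verify the trivial direction by unpacking definitions, and then prove the substantive direction (singular $\Rightarrow$ core) by exploiting the explicit complex eigenvector structure quoted just before the proposition statement.

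For the forward implication, a core graph is singular by definition, so nothing is required. For the converse, assume $\mathrm{Circ}(n,S)$ is singular, which by the displayed eigenvalue formula $P(\zeta)$ means there exists some $n$-th root of unity $\zeta$ with $P(\zeta)=0$, and hence the complex vector $v_\zeta=(1,\zeta,\zeta^2,\ldots,\zeta^{n-1})^{\mathrm T}$ lies in the null space of $A_G$. The entries of $v_\zeta$ are all powers of a nonzero complex number, so $v_\zeta$ is full in the complex sense; the task is to extract a real full null vector from this information.

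I would split on the type of $\zeta$. If $\zeta=1$, then $v_1=(1,1,\ldots,1)^{\mathrm T}$ is already a real full null vector, and if $n$ is even and $\zeta=-1$ annihilates $P$, then $v_{-1}=(1,-1,1,-1,\ldots)^{\mathrm T}$ works. In the remaining case $\zeta\neq\pm 1$, real symmetry of $A_G$ forces $P(\bar\zeta)=\overline{P(\zeta)}=0$, so $v_{\bar\zeta}$ is also a null vector; for any $\alpha=e^{i\theta}\in\mathbb{C}$, the combination $w(\theta):=\alpha v_\zeta+\bar\alpha v_{\bar\zeta}$ is real, and writing $\zeta=e^{2\pi i k/n}$ one gets the explicit $j$-th coordinate
\[
w(\theta)_j \;=\; 2\cos\!\left(\theta+\tfrac{2\pi jk}{n}\right),\qquad j=0,1,\ldots,n-1.
\]
The vector $w(\theta)$ fails to be full precisely when $\theta\equiv \tfrac{\pi}{2}-\tfrac{2\pi jk}{n}\pmod{\pi}$ for some $j$, which rules out only finitely many residues modulo $\pi$, so almost every choice of $\theta$ delivers a real full null vector, establishing that $\mathrm{Circ}(n,S)$ is a core graph.

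I do not anticipate a serious obstacle: the only thing one might initially worry about is that combining a conjugate pair of complex eigenvectors could introduce unavoidable zero entries (for instance, $\mathrm{Re}\, v_\zeta$ alone has a zero at $j=0$ whenever one takes the imaginary part instead of the real part, and fixed-phase combinations can vanish at some index). The parametric phase $\theta$ circumvents this cleanly by a counting argument, and this is the one place where a careful word is needed.
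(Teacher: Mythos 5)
Your proof is correct and follows essentially the same route as the paper: both take the conjugate pair of complex eigenvectors (equivalently $\mathrm{Re}(u)$ and $\mathrm{Im}(u)$), form a one-parameter family of real null vectors, and observe that each coordinate vanishes for only finitely many parameter values, so a generic choice yields a full real null vector. Your phase parametrization $w(\theta)=e^{i\theta}v_\zeta+e^{-i\theta}v_{\bar\zeta}$ and the extra split at $\zeta=\pm1$ are cosmetic variations of the paper's pencil $\mathrm{Re}(u)+\alpha\,\mathrm{Im}(u)$.
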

\begin{proof}
    Let $G$ be a given circulant graph. It is clear that if this graph is core, then it must be singular, hence it suffices to only prove the converse. Now, if we suppose that $G$ is singular, this means that there surely exists an $n$-th root of unity $\zeta$ such that $P(\zeta) = 0$, which further implies that $\mathcal{N}(G)$ certainly contains the full complex vector $u = \begin{bmatrix} 1 & \zeta & \zeta^2 & \cdots & \zeta^{n-1} \end{bmatrix}^\mathrm{T}$. It immediately follows that $\mathcal{N}(G)$ must also contain the real vectors $\mathrm{Re}(u)$ and $\mathrm{Im}(u)$. By denoting $u_\alpha = \mathrm{Re}(u) + \alpha \, \mathrm{Im}(u)$ for any $\alpha \in \mathbb{R}$, it is easy to establish that there exists an $\alpha$ for which $u_\alpha$ is a full real vector. This is a direct consequence of the fact that, for any $j = \overline{0, n-1}$, the equation $\mathrm{Re}(u_j) + \alpha \, \mathrm{Im}(u_j) = 0$ in $\alpha$ has at most one solution, by virtue of $u$ being a full complex vector. The proposition statement follows swiftly from here.
\end{proof}

In a similar manner, it is possible to obtain a connection between circulant graphs and nut graphs by implementing the following brief proposition that was previously shown by Damnjanovi\'c and Stevanovi\'c \cite[Remark 5]{Damnjanovic1}, as noted earlier.
\begin{proposition}\label{nut_property}
    A circulant graph is nut if and only if its nullity is one.
\end{proposition}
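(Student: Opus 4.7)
The plan is very short and hinges entirely on Proposition \ref{core_property}. One direction is immediate: by the paper's own definition of a nut graph (``a core graph of nullity one''), every nut graph has nullity one, so the forward implication requires no argument. Everything of substance lies in the converse.

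For the converse, I would start by assuming $\eta(G)=1$ for a circulant graph $G$. Then $G$ is singular, and Proposition \ref{core_property} yields that $G$ is a core graph, i.e.\ $\mathcal{N}(G)$ contains some full real vector $v$. Because $\dim \mathcal{N}(G)=1$, every element of $\mathcal{N}(G)$ is a scalar multiple of $v$, and any nonzero scalar multiple of a full vector is again full. Hence every nonzero null space vector of $G$ is full, which is precisely the first of the two equivalent definitions of a nut graph recalled in the introduction, so $G$ is nut.

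There is essentially no obstacle beyond invoking the previous proposition at the right moment. The only subtlety worth flagging is that the circulant hypothesis is genuinely needed here: for a general graph of nullity one the (up to scalar) unique null vector can perfectly well have zero entries, so without Proposition \ref{core_property} one could not conclude that $G$ is core. Once that guarantee is in place, the remainder of the argument is a single line of linear algebra.
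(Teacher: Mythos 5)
Your proof is correct. Note that the paper itself does not prove this proposition at all --- it simply cites Damnjanovi\'c and Stevanovi\'c \cite{Damnjanovic1}, remarking only that the statement can be obtained ``in a similar manner'' to Proposition \ref{core_property}. Your argument (nullity one implies singular, Proposition \ref{core_property} then gives a full null vector $v$, and one-dimensionality of $\mathcal{N}(G)$ forces every non-zero null vector to be a non-zero scalar multiple of $v$, hence full) is exactly that natural route, and it correctly identifies that the circulant hypothesis enters only through Proposition \ref{core_property}.
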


We shall end this section by stating and proving one more proposition that deals with the connectivity of arbitrary circulant graphs.
\begin{proposition}\label{con_property}
    If $\mathrm{Circ}(n, S)$ is any circulant graph, where $S = \{s_0, s_1, \ldots, s_{k-1} \}$ and $1 \le s_0 < s_1 < \cdots <  s_{k-1} \le \frac{n}{2}$, then this graph is connected if and only if
    \[
        \gcd(n, s_0, s_1, \ldots, s_{k-1}) = 1 .
    \]
\end{proposition}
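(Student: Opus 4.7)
My plan is to prove the two directions separately, letting $d = \gcd(n, s_0, s_1, \ldots, s_{k-1})$.

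For the ``only if'' direction, I would assume $d > 1$ and produce an explicit partition of the vertex set that no edge crosses. Since $d$ divides each $s_i$ and also divides $n$, every neighbor of a vertex $v$ has the form $v \pm s_i \pmod{n}$, which lies in the same residue class as $v$ modulo $d$. Thus the residue classes $\{0, d, 2d, \ldots\}, \{1, 1+d, \ldots\}, \ldots, \{d-1, 2d-1, \ldots\}$ form $d \ge 2$ nonempty components (more precisely, unions of components), forcing the graph to be disconnected.

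For the ``if'' direction, I would assume $d = 1$ and show that every vertex is reachable from vertex $0$. Starting at $0$, any walk corresponds to an integer combination $c_0 s_0 + c_1 s_1 + \cdots + c_{k-1} s_{k-1} \pmod{n}$, with $c_i \in \mathbb{Z}$, because at each step we may add or subtract any $s_i$ (here I would note that the existence of edges $\{v, v \pm s_i\}$ holds since $\mathrm{Circ}(n,S)$ is defined by $a_{s_i} = a_{n-s_i} = 1$). So the set of vertices reachable from $0$ equals the subgroup of $\mathbb{Z}/n\mathbb{Z}$ generated by $\{s_0, \ldots, s_{k-1}\}$, which by elementary number theory is the subgroup $\langle \gcd(n, s_0, \ldots, s_{k-1}) \rangle = \langle 1 \rangle = \mathbb{Z}/n\mathbb{Z}$. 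Hence every vertex is reachable from $0$ and the graph is connected.

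The argument is essentially a straightforward application of Bezout's identity combined with the observation that the walks in a circulant graph exactly trace out the subgroup of $\mathbb{Z}/n\mathbb{Z}$ generated by $S$. I do not expect any serious obstacle; the only subtlety is the mild bookkeeping needed when $s_{k-1} = n/2$ (so that $s_{k-1}$ and $n - s_{k-1}$ coincide), but this causes no difficulty since the subgroup generated is unaffected. The result is classical and this proof is essentially the shortest route to it.
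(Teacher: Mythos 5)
Your proposal is correct and follows essentially the same route as the paper: the disconnectedness direction via the observation that edges preserve residues modulo the common divisor, and the connectedness direction via B\'ezout's identity (which you phrase as the reachable set being the subgroup of $\mathbb{Z}/n\mathbb{Z}$ generated by $s_0, \ldots, s_{k-1}$, while the paper constructs an explicit walk from vertex $0$ to vertex $1$). The two formulations are interchangeable, so no further comparison is needed.
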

\begin{proof}
    If $\gcd(n, s_0, s_1, \ldots, s_{k-1}) = \beta > 1$, then it is not difficult to notice that by taking any walk that starts at the vertex $0$, it is only possible to reach a vertex $j,\, 0 \le j \le n-1$ satisfying $\beta \mid j$. Hence, the graph cannot be connected. On the other hand, if $\gcd(n, s_0, s_1, \ldots, s_{k-1}) = 1$ and $n \ge 2$, then there certainly exist integers $\alpha_0, \alpha_1, \ldots, \alpha_k \in \mathbb{Z}$ such that
    \begin{equation}\label{aux_connectivity}
        \alpha_0 s_0 + \alpha_1 s_1 + \cdots + \alpha_{k-1} s_{k-1} + \alpha_k n = 1 .
    \end{equation}
    \newpage\noindent
    However, Eq.\ (\ref{aux_connectivity}) practically means that by constructing a walk that starts at the vertex $0$, it is possible to reach the vertex $1$ by taking a $s_0$ jump $\alpha_0$ times, then a $s_1$ jump $\alpha_1$ times, etc. Here, it should be clear that taking a jump a negative number of times would simply mean that the jump should be taken in the opposite direction. By taking this into consideration, it becomes straightforward to deduce that the given graph must be connected, thus completing the proof.
\end{proof}

\section{Explicit nullity formula}\label{sc_the_formula}

From this section onwards, we shall exclusively consider the circulant graphs that are quartic. Now, when dealing with the adjacency matrix of the quartic circulant graph $\mathrm{Circ}(n, \{p, q\}),\, n \ge 5,\, 1 \le p < q < \frac{n}{2}$, it is easy to see that Eq.\ (\ref{general_circulant_p}) quickly transforms to
\begin{equation}\label{quartic_circulant_p}
    P(x) = x^p + x^q + x^{n - q} + x^{n - p}.
\end{equation}
This observation can now be swiftly applied on any quartic circulant graph in order to yield the following conclusion.
\begin{lemma}\label{counting_lemma}
    The nullity of $\mathrm{Circ}(n, \{p, q\}),\, n \ge 5,\, 1 \le p < q < \frac{n}{2}$, is equal to the number of $n$-th roots of unity $\zeta \in \mathbb{C}$ that satisfy the condition
    \[
        \zeta^{p+q} = -1 \quad \lor \quad \zeta^{q-p} = - 1 .
    \]
\end{lemma}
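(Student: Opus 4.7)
The plan is to directly exploit the polynomial $P(x)$ from the preliminaries section, evaluated at $n$-th roots of unity. By the general circulant eigenvalue formula recalled in Section~\ref{sc_preliminaries}, the vectors $\begin{bmatrix} 1 & \zeta & \zeta^2 & \cdots & \zeta^{n-1}\end{bmatrix}^{\mathrm{T}}$, as $\zeta$ runs over the $n$-th roots of unity, constitute a full basis of $\mathbb{C}^n$ (this is essentially the invertibility of the DFT matrix) and are eigenvectors of $A_G$ with eigenvalues $P(\zeta)$. Consequently, the multiplicity of $0$ as an eigenvalue of $A_G$, which equals $\eta(G)$, is exactly the number of $n$-th roots of unity $\zeta$ for which $P(\zeta) = 0$.

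The core of the argument then reduces to rewriting the zero condition $P(\zeta) = 0$ in the desired form. First I would use $\zeta^n = 1$ to replace $\zeta^{n-q}$ and $\zeta^{n-p}$ with $\zeta^{-q}$ and $\zeta^{-p}$ in Eq.~(\ref{quartic_circulant_p}), obtaining
\[
    P(\zeta) = \zeta^p + \zeta^q + \zeta^{-q} + \zeta^{-p}.
\]
The key algebraic step is the factorization
\[
    P(\zeta) \;=\; \zeta^{-q}\bigl(\zeta^{p+q} + 1\bigr)\bigl(\zeta^{q-p} + 1\bigr),
\]
which one verifies by pairing $\zeta^p$ with $\zeta^{-q}$ and $\zeta^q$ with $\zeta^{-p}$, pulling out the common factor $\zeta^{p+q}+1$, and finally factoring $\zeta^{-q}$ out of $\zeta^{-q} + \zeta^{-p}$. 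Since $\zeta^{-q} \neq 0$, the condition $P(\zeta) = 0$ becomes precisely $\zeta^{p+q} = -1 \;\lor\; \zeta^{q-p} = -1$.

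Combining these two observations yields the lemma statement. I do not foresee any serious obstacle: the only conceivable subtlety is the implicit reliance on the fact that the circulant eigenvectors span $\mathbb{C}^n$ (so that algebraic multiplicity of $0$ coincides with the count of vanishing $P(\zeta)$), but this is already invoked in the proof of Proposition~\ref{core_property} and can be cited rather than reproved. The factorization is a short direct verification, so the entire proof should occupy only a few lines.
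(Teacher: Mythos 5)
Your proposal is correct and follows essentially the same route as the paper: both reduce $\eta(G)$ to counting the $n$-th roots of unity with $P(\zeta)=0$ via the circulant eigenvector basis, and both use the identical factorization $\zeta^p+\zeta^q+\zeta^{-q}+\zeta^{-p} = \zeta^{-q}(\zeta^{p+q}+1)(\zeta^{q-p}+1)$ (the paper just multiplies through by $\zeta^q$ instead of factoring out $\zeta^{-q}$). Your explicit remark that the eigenvectors span $\mathbb{C}^n$ only makes precise what the paper leaves implicit.
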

\begin{proof}
    From Eq.\ (\ref{quartic_circulant_p}), we immediately obtain
    \begin{alignat*}{2}
        && P(\zeta) &= 0\\
        \iff \quad && \zeta^p + \zeta^q + \zeta^{n-q} + \zeta^{n-p} &= 0\\
        \iff \quad && \zeta^q \left( \zeta^p + \zeta^q + \zeta^{-q} + \zeta^{-p} \right) &= 0\\
        \iff \quad && \zeta^{2q} + \zeta^{q + p} + \zeta^{q-p} + 1 &= 0\\
        \iff \quad && (\zeta^{q+p} + 1)(\zeta^{q-p} + 1) &= 0 .
    \end{alignat*}
    Thus, the nullity of $\mathrm{Circ}(n, \{p, q\})$ must equal to the number of $n$-th roots of unity $\zeta$ for which $(\zeta^{q+p} + 1)(\zeta^{q-p} + 1) = 0$ is true. The lemma statement follows immediately from here.
\end{proof}

By relying on Lemma \ref{counting_lemma}, we shall now derive an explicit formula for $\eta(\mathrm{Circ}(n, \linebreak \{p, q\}))$ in terms of the parameters $n, p, q$. First of all, let $v_2(x),\, x \in \mathbb{N}$ denote the number of times that the prime two appears as a prime factor of $x$, i.e.\ the unique non-negative integer such that $2^{v_2(x)} \mid x$ and $2^{v_2(x) + 1} \nmid x$. Furthermore, we shall use the auxiliary notation $\Psi(x, y),\, x, y \in \mathbb{N}$ to denote the set of all the $x$-th roots of unity that give $-1$ when raised to the power of $y$. Bearing this in mind, Lemma~\ref{counting_lemma} allows us to express $\eta(\mathrm{Circ}(n, \{p, q\}))$ by using the formula
\[
    \eta(\mathrm{Circ}(n, \{p, q\})) = | \Psi(n, p+q) \cup \Psi(n, q-p) |,
\]
which instantly transforms to
\begin{equation}\label{counting_cool}
    \eta(\mathrm{Circ}(n, \{p, q\})) = | \Psi(n, p+q) | + |\Psi(n, q-p)| - |\Psi(n, p+q) \cap \Psi(n, q-p) |.
\end{equation}
Taking Eq.\ (\ref{counting_cool}) into consideration, we are able to disclose and prove the following theorem that computes $\eta(\mathrm{Circ}(n, \{p, q\}))$ in terms of $n, p, q$, as desired.

\begin{theorem}\label{main_theorem}
    Let $G = \mathrm{Circ}(n, \{p, q\}),\, n \ge 5,\, 1 \le p < q < \frac{n}{2}$ be any quartic circulant graph. If we denote
    \begin{align*}
        \eta^{(1)} &= \gcd(n, p+q), & v_2^{(1)} &= v_2(p+q),\\
        \eta^{(2)} &= \gcd(n, q-p), & v_2^{(2)} &= v_2(q-p),\\
        \eta^{(3)} &= \gcd(n, p+q, q-p), & v_2^{(3)} &= v_2(n),
    \end{align*}
    then $\eta(G)$ can be computed by using the following formula
    \begin{equation}\label{main_nullity_formula}
        \eta(G) = \begin{cases}
            0, & v_2^{(1)} \ge v_2^{(3)} \land v_2^{(2)} \ge v_2^{(3)},\\
            \eta^{(1)}, & v_2^{(1)} < v_2^{(3)} \land v_2^{(2)} \ge v_2^{(3)},\\
            \eta^{(2)}, & v_2^{(1)} \ge v_2^{(3)} \land v_2^{(2)} < v_2^{(3)},\\
            \eta^{(1)} + \eta^{(2)}, & v_2^{(1)} < v_2^{(3)} \land v_2^{(2)} < v_2^{(3)} \land v_2^{(1)} \neq v_2^{(2)},\\
            \eta^{(1)} + \eta^{(2)} - \eta^{(3)}, & v_2^{(1)} < v_2^{(3)} \land v_2^{(2)} < v_2^{(3)} \land v_2^{(1)} = v_2^{(2)}.
        \end{cases}
    \end{equation}
\end{theorem}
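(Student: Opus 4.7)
The plan is to take Eq.~(\ref{counting_cool}) as the starting point and evaluate each of $|\Psi(n,p+q)|$, $|\Psi(n,q-p)|$, and $|\Psi(n,p+q) \cap \Psi(n,q-p)|$ in closed form in terms of $n$, $p$, $q$; the five-way case split in Eq.~(\ref{main_nullity_formula}) will then follow by straightforward bookkeeping.

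First I would prove a single-parameter auxiliary lemma: for any positive integers $n$ and $y$,
\[
|\Psi(n,y)| = \begin{cases} \gcd(n,y), & v_2(y) < v_2(n),\\ 0, & \text{otherwise}. \end{cases}
\]
The map $\zeta \mapsto \zeta^y$ is an endomorphism of the cyclic group of $n$-th roots of unity, whose image is the cyclic subgroup of order $n/\gcd(n,y)$ and whose kernel has order $\gcd(n,y)$. The element $-1$ lies in the image if and only if $n/\gcd(n,y)$ is even, which is equivalent to $v_2(y) < v_2(n)$; and when it does, $\Psi(n,y)$ is a coset of the kernel, hence has cardinality $\gcd(n,y)$.

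The heart of the argument is computing the intersection. Set $d = \gcd(p+q, q-p)$ and write $p+q = d\alpha$, $q-p = d\beta$ with $\gcd(\alpha,\beta)=1$. Substituting $\eta = \zeta^d$, the two defining equations become $\eta^\alpha = -1 = \eta^\beta$. Bezout yields $s\alpha + t\beta = 1$ for some integers $s, t$, whence $\eta = \eta^{s\alpha + t\beta} = (-1)^{s+t} \in \{1, -1\}$; but $\eta^\alpha = -1$ rules out $\eta = 1$, so $\eta = -1$, and then $(-1)^\alpha = -1 = (-1)^\beta$ forces both $\alpha$ and $\beta$ to be odd. Conversely, $\eta = -1$ together with $\alpha, \beta$ odd clearly satisfies both equations. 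Hence
\[
\Psi(n,p+q) \cap \Psi(n,q-p) = \begin{cases} \Psi(n,d), & \alpha, \beta \text{ both odd},\\ \emptyset, & \text{otherwise}. \end{cases}
\]
Since $v_2(d) = \min(v_2(p+q), v_2(q-p))$, the parity condition on $\alpha, \beta$ is equivalent to $v_2^{(1)} = v_2^{(2)}$, and the auxiliary lemma applied to $y = d$ gives $|\Psi(n,d)| = \eta^{(3)}$ precisely when $v_2(d) < v_2^{(3)}$, i.e.\ whenever at least one of $v_2^{(1)}, v_2^{(2)}$ is strictly less than $v_2^{(3)}$.

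Plugging the three cardinalities into Eq.~(\ref{counting_cool}) and splitting on which of $v_2^{(1)}, v_2^{(2)}$ are strictly less than $v_2^{(3)}$---and, in the subcase where both are, on whether $v_2^{(1)} = v_2^{(2)}$---reproduces the five cases of Eq.~(\ref{main_nullity_formula}). Whenever one of the individual cardinalities vanishes, the intersection vanishes as well by set inclusion, so no separate argument is needed in those cases. I expect the only real obstacle to be spotting the Bezout change of variables $\eta = \zeta^d$ in the intersection step; once that observation is made, the remainder is a mechanical case analysis on $2$-adic valuations.
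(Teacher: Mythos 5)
Your proposal is correct and follows essentially the same route as the paper: inclusion--exclusion on $\Psi(n,p+q)$ and $\Psi(n,q-p)$, closed forms for the two individual cardinalities, and the reduction of the intersection to $\Psi(n,\gcd(p+q,q-p))$ precisely when $v_2(p+q)=v_2(q-p)$, followed by the same five-way case split. The only differences are cosmetic: you count $|\Psi(n,y)|$ via the kernel/image of the $y$-th power endomorphism where the paper counts solutions of the linear congruence $t(p+q)\equiv_n \frac{n}{2}$, and you obtain $\zeta^{\gcd(p+q,q-p)}=-1$ via a Bezout substitution where the paper squares and uses $\gcd(2(p+q),2(q-p))=2\gcd(p+q,q-p)$.
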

\begin{proof}
    First of all, suppose that $n$ is odd. In this case, it is clear that neither $\zeta^{p+q} = -1$ nor $\zeta^{q-p} = -1$ can have any solution among the $n$-th roots of unity $\zeta \in \mathbb{C},\, \zeta^n = 1$ due to the sheer fact that $-1$ is surely not an $n$-th root of unity. For this reason, we immediately obtain $\eta(G) = 0$ by virtue of Lemma \ref{counting_lemma}. This result is in accordance with Eq.\ (\ref{main_nullity_formula}), since $n$ being odd would surely imply $v_2(p+q) \ge v_2(n)$ and $v_2(q-p) \ge v_2(n)$. In the rest of the proof, we shall only consider the scenario when $2 \mid n$.

    Now, if we put $\zeta = e^{\frac{2t \pi}{n} i}$ for a uniquely defined $t \in \mathbb{N}_0,\, 0 \le t < n$, it is straightforward to notice that the equation
    \begin{equation}\label{aux_1}
        \zeta^{p+q} = -1
    \end{equation}
    in $\zeta \in \mathbb{C},\, \zeta^n = 1$, becomes equivalent to the equation
    \begin{equation}\label{aux_2}
        t (p+q) \equiv_n \frac{n}{2}
    \end{equation}
    in $t \in \mathbb{N}_0,\, 0 \le t < n$. Hence, Eqs.\ (\ref{aux_1}) and (\ref{aux_2}) contain the same number of solutions. However, Eq.\ (\ref{aux_2}) is just a linear congruence equation, which means that it has a solution if and only if $\gcd(n, p+q) \mid \frac{n}{2}$, and in case it does contain a solution, the total number of distinct solutions must be $\gcd(n, p+q)$ (see, for example, \cite[pp.\ 170, Theorem 5.14]{Tattersall}). Besides that, it is not difficult to realize that $\gcd(n, p+q) \mid \frac{n}{2}$ is equivalent to $v_2(p + q) < v_2(n)$. By taking everything into consideration, we conclude that
    \begin{equation}\label{aux_3}
        | \Psi(n, p+q) | = \begin{cases}
            \gcd(n, p+q), & v_2(p + q) < v_2(n),\\
            0, & v_2(p + q) \ge v_2(n) .
        \end{cases}
    \end{equation}

    The equation $\zeta^{q-p} = -1$ in $\zeta \in \mathbb{C},\, \zeta^n = 1$, can be analyzed in an entirely analogous manner in order to obtain
    \begin{equation}\label{aux_4}
        | \Psi(n, q-p) | = \begin{cases}
            \gcd(n, q-p), & v_2(q - p) < v_2(n),\\
            0, & v_2(q - p) \ge v_2(n) .
        \end{cases}
    \end{equation}
    
    We shall now focus on computing $| \Psi(n, p+q) \cap \Psi(n, q-p) |$, i.e.\ the total number of solutions of the system of equations
    \begin{align}\label{aux_5}
        \zeta^{p+q} = -1, && \zeta^{q-p} = -1,
    \end{align}
    in $\zeta \in \mathbb{C},\, \zeta^n = 1$. Suppose that $\zeta_0$ is a solution of Eq.\ (\ref{aux_5}). From here it immediately follows that $\zeta_0^{2(p+q)} = 1$ and $\zeta_0^{2(q-p)} = 1$, which further implies
    \begin{alignat*}{2}
        && \zeta_0^{\gcd(2(p+q), 2(q-p))} &= 1\\
        \implies \quad && \zeta_0^{2 \gcd(p+q, q-p)} &= 1\\
        \implies \quad && \zeta_0^{\gcd(p+q, q-p)} &\in \{1, -1\} .
    \end{alignat*}
    Now, the option $\zeta_0^{\gcd(p+q, q-p)} = 1$ is certainly impossible, since this would give $\zeta_0^{p+q} = \zeta_0^{q-p} = 1$. Thus, $\zeta_0^{\gcd(p+q, q-p)} = -1$. Given the fact that
    \begin{align*}
        \zeta_0^{p+q} = -1, && \zeta_0^{q-p} = -1, && \zeta_0^{\gcd(p+q, q-p)} = -1,
    \end{align*}
    it is evident that $\dfrac{p+q}{\gcd(p+q, q-p)}$ and $\dfrac{q-p}{\gcd(p+q, q-p)}$ must be odd. Hence, if $v_2(p+q) \neq v_2(q-p)$, then Eq.\ (\ref{aux_5}) has no solutions. Provided $v_2(p+q) = v_2(q-p)$ is true, it is easy to see that Eq.\ (\ref{aux_5}) becomes equivalent to
    \begin{equation}\label{aux_6}
        \zeta_0^{\gcd(p+q, q-p)} = -1 .
    \end{equation}
    As already discussed, Eq.\ (\ref{aux_6}) has no solutions if $v_2(\gcd(p+q, q-p)) \ge v_2(n)$, and if $v_2(\gcd(p+q, q-p)) < v_2(n)$ does hold, then this equation contains precisely $\gcd(n, \gcd(p+q, q-p))$ solutions. Taking everything into consideration, we get
    \begin{align}\label{aux_7}
    \begin{split}
        | \Psi(n, p+q) &\cap \Psi(n, q-p) | =\\
        &= \begin{cases}
            \gcd(n, p+q, q-p), & v_2(p + q) = v_2(q - p) < v_2(n),\\
            0, & \mbox{otherwise} .
        \end{cases}
    \end{split}
    \end{align}

    Finally, Eq.\ (\ref{main_nullity_formula}) follows by directly using Eq.~(\ref{counting_cool}) together with Eqs.\ (\ref{aux_3}), (\ref{aux_4}) and (\ref{aux_7}).
\end{proof}

By implementing Theorem \ref{main_theorem}, Eq.\ (\ref{main_nullity_formula}) makes it trivial to deduce whether a given quartic circulant graph is singular. The corresponding result is disclosed in the following short corollary whose full proof we choose to omit.
\begin{corollary}\label{singularity_cond}
    A quartic circulant graph $\mathrm{Circ}(n, \{p, q\}),\, n \ge 5, 1 \le p < q < \frac{n}{2}$, is singular if and only if $v_2(p + q) < v_2(n)$ or $v_2(q - p) < v_2(n)$.
\end{corollary}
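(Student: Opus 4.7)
The plan is to read off the corollary directly from Theorem~\ref{main_theorem}, which is precisely why the authors indicate they omit the full proof. Singularity of $G = \mathrm{Circ}(n, \{p,q\})$ means $\eta(G) \ge 1$, so I would simply identify which branches of the piecewise formula \eqref{main_nullity_formula} produce $\eta(G) = 0$ and then negate the defining condition.

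First I would observe that among the five cases, only the top case outputs $0$; the remaining four cases all output a strictly positive value, since $\eta^{(1)} = \gcd(n, p+q) \ge 1$ and $\eta^{(2)} = \gcd(n, q-p) \ge 1$ are always positive, and in the last case $\eta^{(1)} + \eta^{(2)} - \eta^{(3)} \ge \eta^{(1)} \ge 1$ because $\eta^{(3)} \mid \eta^{(2)}$ gives $\eta^{(3)} \le \eta^{(2)}$. Thus $\eta(G) \ge 1$ if and only if we are \emph{not} in the top branch, i.e.\ if and only if the condition $v_2^{(1)} \ge v_2^{(3)} \land v_2^{(2)} \ge v_2^{(3)}$ fails.

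Next I would negate this condition: its failure is equivalent to $v_2^{(1)} < v_2^{(3)}$ or $v_2^{(2)} < v_2^{(3)}$, which unpacks exactly to $v_2(p+q) < v_2(n)$ or $v_2(q-p) < v_2(n)$. That is the statement of the corollary.

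There is no real obstacle here; the only thing worth double-checking is the positivity claim in the final branch of \eqref{main_nullity_formula}, i.e.\ that $\eta^{(1)} + \eta^{(2)} - \eta^{(3)}$ is genuinely positive. This is immediate from $\eta^{(3)} = \gcd(n, p+q, q-p)$ dividing both $\eta^{(1)}$ and $\eta^{(2)}$, so $\eta^{(3)} \le \min(\eta^{(1)}, \eta^{(2)})$ and thus $\eta^{(1)} + \eta^{(2)} - \eta^{(3)} \ge \max(\eta^{(1)}, \eta^{(2)}) \ge 1$. Once this is noted, the corollary is a one-line consequence of Theorem~\ref{main_theorem}.
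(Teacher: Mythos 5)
Your proposal is correct and follows exactly the route the paper intends: the authors omit the proof precisely because the corollary is read off from the case analysis in Eq.~\eqref{main_nullity_formula}, which is what you do, including the only nontrivial check that the branch value $\eta^{(1)} + \eta^{(2)} - \eta^{(3)}$ is positive because $\eta^{(3)} = \gcd(n, p+q, q-p)$ divides both $\eta^{(1)}$ and $\eta^{(2)}$. Nothing is missing.
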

Bearing in mind Proposition \ref{core_property}, Corollary \ref{singularity_cond} can also be implemented in order to test whether a given quartic circulant graph is core. Besides that, it becomes relatively simple to inspect whether a certain quartic circulant graph is nut. This observation is demonstrated within the next corollary.
\begin{corollary}\label{nullity_one}
    A quartic circulant graph $\mathrm{Circ}(n, \{p, q\}),\, n \ge 5, 1 \le p < q < \frac{n}{2}$, is nut if and only if the following conditions hold:
    \begin{itemize}
        \item $n$ is even;
        \item $p$ and $q$ are of different parities;
        \item $\gcd(n, p+q) = \gcd(n, q-p) = 1$.
    \end{itemize}
\end{corollary}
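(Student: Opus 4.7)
The plan is to invoke Proposition~\ref{nut_property} and reduce the claim to $\eta(G) = 1$, then apply Theorem~\ref{main_theorem} and analyse each of the five cases in Eq.~(\ref{main_nullity_formula}). The first case is immediately ruled out since it gives nullity $0$. The fourth case gives $\eta(G) = \gcd(n, p+q) + \gcd(n, q-p) \ge 2$, so it is also ruled out. The fifth case gives $\eta(G) = a + b - \gcd(a, b)$ where $a = \gcd(n, p+q)$ and $b = \gcd(n, q-p)$; since $a + b - \gcd(a, b) \ge \max(a, b)$, demanding $\eta(G) = 1$ forces $a = b = 1$, which in turn yields $\gcd(a, b) = 1$ and is consistent.

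Next I would handle the second and third cases, where the nullity reduces to $\gcd(n, p+q)$ or $\gcd(n, q-p)$ respectively. The key observation is that $(p+q) + (q-p) = 2q$ is even, so $p+q$ and $q-p$ always have the same parity. Thus if $n$ is even and $\gcd(n, p+q) = 1$, then $p+q$ must be odd, whence $q-p$ is odd as well, giving $v_2^{(2)} = 0 < v_2^{(3)}$ and contradicting the defining condition $v_2^{(2)} \ge v_2^{(3)}$ of the second case. The third case is ruled out symmetrically. Consequently, $\eta(G) = 1$ can occur only in the fifth case, and only when $\gcd(n, p+q) = \gcd(n, q-p) = 1$.

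Finally I would translate the fifth case conditions back into the three bullet points of the corollary. The condition $v_2^{(1)}, v_2^{(2)} < v_2^{(3)}$ forces $v_2(n) \ge 1$, i.e.\ $n$ even; combined with $\gcd(n, p+q) = 1$, this forces $p+q$ to be odd, which is precisely the statement that $p$ and $q$ have different parities (and this automatically makes $v_2(p+q) = v_2(q-p) = 0$, so the equality of $2$-adic valuations is free). Conversely, given the three bulleted conditions, $p+q$ and $q-p$ are both odd, so all hypotheses of the fifth case are satisfied and $\eta(G) = 1 + 1 - 1 = 1$.

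No step is really an obstacle; the only point requiring a little care is the parity compatibility argument that eliminates the second and third cases, since one must remember that $p+q$ and $q-p$ share parity, and the arithmetic identity $a + b - \gcd(a, b) = 1 \iff a = b = 1$ for positive integers that pins down the fifth case.
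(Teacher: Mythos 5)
Your proposal is correct and follows essentially the same route as the paper: both reduce the nut property to $\eta(G)=1$ via Proposition~\ref{nut_property} and then case-analyse the formula of Theorem~\ref{main_theorem}, the only difference being that you walk through the five branches of Eq.~(\ref{main_nullity_formula}) directly (using that $p+q$ and $q-p$ share parity and that $\eta^{(3)}$ divides both $\eta^{(1)}$ and $\eta^{(2)}$), whereas the paper splits on the parity of $n$ and of $p,q$ before invoking the fifth branch. All of your case eliminations are sound, so no gap remains.
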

\begin{proof}
    If $n$ is odd, then Theorem \ref{main_theorem} immediately gives $\eta(\mathrm{Circ}(n, \{p, q\})) = 0$, since $v_2(p + q) \ge v_2(n)$ and $v_2(q-p) \ge v_2(n)$ certainly hold. For this reason, $n$ being odd would imply that the graph is surely not nut.

    Now, suppose that $n$ is even. If $p$ and $q$ were of the same parity, then $p + q$ and $q - p$ would be even, hence $\gcd(n, p + q), \gcd(n, q - p), \gcd(n, p + q, q - p)$ would all be even as well. However, by implementing Theorem \ref{main_theorem}, we now obtain that $\eta(\mathrm{Circ}(n, \{p, q\}))$ would necessarily have to be even, hence the given graph could not be a nut graph, by virtue of Proposition \ref{nut_property}.
    
    Finally, suppose that $p$ and $q$ are of different parities. In this case, it is clear that $v_2(p + q) = v_2(q - p) = 0$, while $v_2(n) \ge 1$. By applying Theorem \ref{main_theorem}, we obtain
    \[
        \eta(\mathrm{Circ}(n, \{p, q\})) = \gcd(n, p + q) + \gcd(n, q - p) - \gcd(n, p + q, q - p) .
    \]
    It is not difficult to notice that
    \begin{align*}
        \eta(\mathrm{Circ}(n, \{p, q\})) \ge \gcd(n, p + q),\\
        \eta(\mathrm{Circ}(n, \{p, q\})) \ge \gcd(n, q - p),
    \end{align*}
    both have to be true. Thus, if $\gcd(n, p + q) > 1$ or $\gcd(n, q - p) > 1$, then $\eta(\mathrm{Circ}(n, \{p, q\}))$ must be greater than one, implying that the graph cannot be nut. Finally, if we have that $\gcd(n, p + q) = 1$ and $\gcd(n, q - p) = 1$ both indeed hold, then Theorem \ref{main_theorem} swiftly yields that the given graph must have the nullity one. Applying Proposition \ref{nut_property} gives that the graph is surely nut, as desired.
\end{proof}

\section{Minimum nullity problem}\label{sc_nullity_min}

In this section, we will disclose the full solution to the quartic circulant graph minimum nullity problem. Let $\mathcal{G}_n,\, n \ge 5$ be the set of all the quartic circulant graphs of order $n$, and let $\mathcal{C}_n \subseteq \mathcal{G}_n$ be the corresponding subset comprising only the connected graphs. The precisely formulated goals of the aforementioned minimum nullity study are to
\begin{itemize}
    \item compute the minimum nullity $\mathcal{N}_\mathrm{min}$,
    \item find all the graphs attaining the minimum nullity,
    \item determine the null spaces of all the aforementioned graphs,
\end{itemize}
for both $\mathcal{G}_n$ and $\mathcal{C}_n$, for each $n \ge 5$. To start with, we will provide the solution to the minimum nullity problem on $\mathcal{G}_n$.

\begin{theorem}\label{min_nullity_g_th}
    For a given positive integer $n \in \mathbb{N},\, n \ge 5$, the minimum nullity that a quartic circulant graph of order $n$ can achieve is given in the second column of Table \ref{min_nullity_g_table}. Also, the graphs $\mathrm{Circ}(n, \{p, q\}), \, 1 \le p < q < \frac{n}{2}$, attaining the minimum nullity are characterized by the conditions displayed in the third column, while their corresponding null spaces are disclosed in the fourth column.
\begin{table}[ht]
\begin{center}
{\scriptsize
\begin{tabular}{llll}
\toprule order & $\mathcal{N}_\mathrm{min}$ & graphs attaining $\mathcal{N}_\mathrm{min}$ & corresponding null space \\
\midrule
$n \ge 5, \, 2 \nmid n$ & $0$ & $p$ and $q$ are arbitrary & trivial\\
\midrule
$n = 6$ & $3$ & \begin{tabular}{@{}l@{}}$p = 1$ and $q = 2$ \\ (this is the only graph)\end{tabular} & $\mathrm{span}\left( \begin{bmatrix} 1\\ 0\\ 0\\ -1\\ 0\\ 0 \end{bmatrix}, \begin{bmatrix} 0\\ 1\\ 0\\ 0\\ -1\\ 0 \end{bmatrix}, \begin{bmatrix} 0\\ 0\\ 1\\ 0\\ 0\\ -1 \end{bmatrix} \right)$\\
\midrule
\begin{tabular}{@{}l@{}}$n = 2^\alpha$ for some $\alpha \ge 3$, or \\ $n = 3 \cdot 2^\alpha$ for some $\alpha \ge 2$\end{tabular} & $1$ & \begin{tabular}{@{}l@{}}$p \not\equiv_2 q$ and \\ $\gcd(n, p+q) = \gcd(n, q-p) = 1$\end{tabular} & $\mathrm{span}\left( \begin{bmatrix} 1\\ -1\\ 1\\ -1\\ \vdots \\ 1\\ -1 \end{bmatrix} \right)$\\
\midrule
$n = \beta \, 2^\alpha$ for some & \multirow{2}{*}{$0$} & $p = \gamma \, 2^{\alpha-1}$ and $q = \delta \, 2^{\alpha-1}$ & \multirow{2}{*}{trivial}\\
$\alpha \ge 1, \, \beta \ge 5,\, 2 \nmid \beta$ & & for some $1 \le \gamma < \delta < \beta,\, \gamma \equiv_2 \delta$ & \\
\bottomrule
\end{tabular}
}
\caption{The complete solution to the minimum nullity problem on $\mathcal{G}_n$.}      
\label{min_nullity_g_table}
\end{center}
\end{table}
\end{theorem}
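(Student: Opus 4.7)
The plan is to split according to the $2$-adic factorisation $n=\beta\cdot 2^\alpha$, with $\beta$ odd and $\alpha\ge 0$. The starting point is Corollary~\ref{singularity_cond}: $\mathrm{Circ}(n,\{p,q\})$ has nullity $0$ iff $v_2(p+q)\ge\alpha$ and $v_2(q-p)\ge\alpha$. When $\alpha=0$ (odd $n$) this is automatic, already yielding the first row. When $\alpha\ge 1$, adding and subtracting the two divisibilities gives $2^\alpha\mid 2p$ and $2^\alpha\mid 2q$, so writing $p=\gamma\cdot 2^{\alpha-1}$, $q=\delta\cdot 2^{\alpha-1}$ the condition collapses to $\gamma\equiv\delta\pmod 2$, while $1\le p<q<n/2$ becomes $1\le\gamma<\delta<\beta$. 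Hence, for $\alpha\ge 1$, a non-singular graph in $\mathcal{G}_n$ exists iff $\beta$ admits a same-parity pair in $\{1,\dots,\beta-1\}$, which happens iff $\beta\ge 5$; this is exactly the last row of Table~\ref{min_nullity_g_table}.

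It remains to treat $\beta\in\{1,3\}$, where the previous step forces $\mathcal{N}_{\mathrm{min}}\ge 1$. By Proposition~\ref{nut_property} and Corollary~\ref{nullity_one}, $\mathcal{N}_{\mathrm{min}}=1$ whenever there is an admissible pair $(p,q)$ of opposite parities with $\gcd(n,p+q)=\gcd(n,q-p)=1$. I would exhibit $(p,q)=(1,2)$ for $n=2^\alpha$ with $\alpha\ge 3$, and $(p,q)=(2,3)$ for $n=3\cdot 2^\alpha$ with $\alpha\ge 2$; both lie in the valid range (which is precisely why these lower bounds on $\alpha$ appear), and a one-line gcd computation (involving only $3$ and $5$) confirms the nut property, giving the third row. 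The leftover case $n=6$ corresponds to $\beta=3$, $\alpha=1$, where the sole admissible pair is $(1,2)$; plugging into Theorem~\ref{main_theorem} with $v_2^{(1)}=v_2^{(2)}=0<1=v_2^{(3)}$ yields $\eta=\gcd(6,3)+\gcd(6,1)-\gcd(6,3,1)=3+1-1=3$, matching the second row.

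For the null-space descriptions, only the nut row and the $n=6$ row require work. In the nut case, letting $x_i=(-1)^i$ and using $p\not\equiv q\pmod 2$ gives
\[
(Ax)_i=x_{i+p}+x_{i-p}+x_{i+q}+x_{i-q}=2(-1)^i\bigl((-1)^p+(-1)^q\bigr)=0,
\]
so $x\in\mathcal{N}(G)$, and since $\eta(G)=1$ this vector spans. For $n=6$, the three vectors listed in the table have pairwise disjoint supports, so they are linearly independent; a direct check using the adjacency in $\mathrm{Circ}(6,\{1,2\})$ (vertex $i$ is joined to everything except its antipode $i+3\pmod 6$) shows that each of them is annihilated by $A$, and since $\eta=3$ they form a basis of $\mathcal{N}(G)$.

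The main obstacle is bookkeeping rather than technical depth: one must verify that the characterisation $p=\gamma\cdot 2^{\alpha-1}$, $q=\delta\cdot 2^{\alpha-1}$, $\gamma\equiv\delta\pmod 2$ truly captures \emph{every} non-singular graph (the ``only if'' direction of the first step), that the proposed nut witnesses $(1,2)$ and $(2,3)$ remain admissible throughout their respective ranges of $\alpha$, and that the degenerate case $n=6$ is cleanly separated from the generic $\beta=3$ regime in which a nut graph does exist.
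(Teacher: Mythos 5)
Your proposal is correct, and its skeleton matches the paper's proof: both use Corollary~\ref{singularity_cond} to show that nullity zero forces $2^{\alpha-1}\mid p$, $2^{\alpha-1}\mid q$ with $\gamma\equiv_2\delta$, both translate $1\le p<q<\frac{n}{2}$ into $1\le\gamma<\delta<\beta$ so that non-singular graphs exist iff $\beta\ge5$, and both isolate $n=6$ as the degenerate $\beta=3$ case. Where you genuinely diverge is in the residual case $\beta\in\{1,3\}$, $n\ge8$: the paper establishes that the minimum is $1$ by citing the existence result of Damnjanovi\'c \cite[Theorem~5]{Damnjanovic3} for quartic circulant nut graphs of every even order $n\ge 8$, and quotes \cite[Remark~5]{Damnjanovic1} for the fact that the null space of a circulant nut graph is spanned by the alternating vector; you instead exhibit the explicit witnesses $(p,q)=(1,2)$ for $n=2^\alpha$ and $(p,q)=(2,3)$ for $n=3\cdot2^\alpha$ (both pass the parity and gcd tests of Corollary~\ref{nullity_one}, since $\gcd(2^\alpha,3)=\gcd(3\cdot2^\alpha,5)=1$), and you verify directly that $x_i=(-1)^i$ is annihilated by the adjacency matrix, which is legitimate because $n$ is even so reduction of indices modulo $n$ preserves parity. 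Similarly, for $n=6$ you get $\eta=3$ from Theorem~\ref{main_theorem} and then check the three antipodal difference vectors, whereas the paper computes the null space of the $6\times6$ matrix directly. Your route is thus self-contained where the paper leans on citations, at the modest cost of some explicit verification; the characterization of the graphs attaining $\mathcal{N}_{\mathrm{min}}=1$ (third column of the table) still comes from Proposition~\ref{nut_property} plus Corollary~\ref{nullity_one} in both treatments, so nothing is lost.
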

\begin{proof}
    If $n$ is odd, then Theorem \ref{main_theorem} immediately claims that each quartic circulant graph of order $n$ must necessarily have the nullity zero. Thus, in this case, the minimum nullity is clearly zero and every single graph attains it. Of course, all the null spaces of these graphs must be trivial. These observations correspond to the first row in Table \ref{min_nullity_g_table}. Throughout the rest of the proof, we shall take $n$ to be even.

    It is obvious that there exists a single quartic circulant graph of order six --- $\mathrm{Circ}(6, \{1, 2\})$. Its adjacency matrix is
    \[
        \begin{bmatrix}
            0 & 1 & 1 & 0 & 1 & 1\\
            1 & 0 & 1 & 1 & 0 & 1\\
            1 & 1 & 0 & 1 & 1 & 0\\
            0 & 1 & 1 & 0 & 1 & 1\\
            1 & 0 & 1 & 1 & 0 & 1\\
            1 & 1 & 0 & 1 & 1 & 0
        \end{bmatrix}
    \]
    and it is straightforward to check that its null space is given by
    \[
        \mathrm{span}\left( \begin{bmatrix} 1\\ 0\\ 0\\ -1\\ 0\\ 0 \end{bmatrix}, \begin{bmatrix} 0\\ 1\\ 0\\ 0\\ -1\\ 0 \end{bmatrix}, \begin{bmatrix} 0\\ 0\\ 1\\ 0\\ 0\\ -1 \end{bmatrix} \right) .
    \]
    This means that the given graph has the nullity three. From here, it follows that the minimum nullity for the case $n = 6$ is equal to three and it is attained by the single quartic circulant graph of order six --- the graph determined by $p = 1$ and $q = 2$. These conclusions correspond to the second row in Table \ref{min_nullity_g_table}.

    \newpage
    Suppose that $n \ge 8$. Now, let $n = \beta \, 2^\alpha$, where $\alpha = v_2(n)$ and $2 \nmid b$. By virtue of Corollary \ref{singularity_cond}, we see that a quartic circulant graph $\mathrm{Circ}(n, \{p, q\}),\, 1 \le p < q < \frac{n}{2}$, of order $n$ is of nullity zero if and only if $v_2(p + q) \ge \alpha$ and $v_2(q - p) \ge \alpha$. If we suppose that a given graph $\mathrm{Circ}(n, \{p, q\})$ does have the nullity zero, we are immediately able to obtain that
    $2^\alpha \mid p + q$ and $2^\alpha \mid q - p$, which then gives
    \[
        2^\alpha \mid (p + q) - (q - p) \quad \implies \quad 2^\alpha \mid 2p \quad \implies \quad 2^{\alpha-1} \mid p,
    \]
    alongside
    \[
        2^\alpha \mid (p + q) + (q - p) \quad \implies \quad 2^\alpha \mid 2q \quad \implies \quad 2^{\alpha-1} \mid q .
    \]
    Let $p = \gamma \, 2^{\alpha - 1}$ and $q = \delta \, 2^{\alpha-1}$. From $2^\alpha \mid p + q$, it is easy to notice that $\gamma$ and $\delta$ have to be of the same parity. The converse is also straightforward to show by implementing Corollary \ref{singularity_cond} --- if $p = \gamma \, 2^{\alpha - 1}$ and $q = \delta \, 2^{\alpha - 1}$ for some $\gamma \equiv_2 \delta$, then $\mathrm{Circ}(n, \{p, q\})$ does have the nullity zero.
    
    Now, the question remains whether the values $\gamma$ and $\delta$ can be chosen in order to yield the desired $p$ and $q$ parameters accordingly. In order to give an answer to this question, it is essential to realize that the condition $1 \le p < q < \frac{n}{2}$ quickly converts to $1 \le \gamma < \delta < \beta$. If $\beta \ge 5$, then it is easy to see that the values $\gamma$ and $\delta$ can indeed be picked in at least one valid way. This means that in this scenario, the minimum nullity that a quartic circulant graph of order $n$ can possess is equal to zero. Furthermore, all the graphs that attain the said nullity can be spawned in the aforementioned manner by relying on any two values $\gamma$ and $\delta$ such that $1 \le \gamma < \delta < \beta$ and $\gamma \equiv_2 \delta$. Clearly, all of their null spaces must be trivial. This case corresponds to the fourth row in Table \ref{min_nullity_g_table}.

    Finally, suppose that $\beta = 1$ or $\beta = 3$. The condition $1 \le \gamma < \delta < \beta$ now becomes impossible to satisfy, which means that in this case there does not exist a quartic circulant graph of order $n$ whose nullity is zero. However, we do know that for each even $n \ge 8$, there surely exists a quartic circulant graph of order $n$ with nullity one, as shown by Damnjanovi\'c \cite[Theorem 5]{Damnjanovic3}. For this reason, in this last case, the desired minimum nullity must be equal to one. Logically, we now have that the graphs attaining the minimum nullity are precisely those that are nut, due to Proposition \ref{nut_property}. Hence, we obtain their full characterization by directly implementing Corollary \ref{nullity_one}. Besides that, we immediately get that all of their null spaces must be given by
    \[
        \mathrm{span}\left( \begin{bmatrix} 1 & -1 & 1 & -1 & \cdots & 1 & -1 \end{bmatrix}^\mathrm{T} \right) ,
    \]
    as shown by Damnjanovi\'c and Stevanovi\'c \cite[Remark 5]{Damnjanovic1}. This final case corresponds to the third row in Table \ref{min_nullity_g_table}.
\end{proof}

We now turn our focus to the minimum nullity problem on $\mathcal{C}_n$, i.e.\ solving the same problem as we have in Theorem \ref{min_nullity_g_th}, but with the additional constraint that all the graphs considered must be connected. We are able to quickly resolve the said problem by using a technique similar to the one previously used in the aforementioned theorem. The corresponding results are disclosed in the following theorem.
\begin{theorem}\label{min_nullity_c_th}
    For a given positive integer $n \in \mathbb{N},\, n \ge 5$, the minimum nullity that a connected quartic circulant graph of order $n$ can achieve is given in the second column of Table \ref{min_nullity_c_table}. Also, the graphs $\mathrm{Circ}(n, \{p, q\}), \, 1 \le p < q < \frac{n}{2}$, attaining the minimum nullity are characterized by the conditions displayed in the third column, while their corresponding null spaces are disclosed in the fourth column.
\begin{table}[ht]
\begin{center}
{\scriptsize
\begin{tabular}{llll}
\toprule order & $\mathcal{N}_\mathrm{min}$ & graphs attaining $\mathcal{N}_\mathrm{min}$ & corresponding null space \\
\midrule
$n \ge 5, \, 2 \nmid n$ & $0$ & $\gcd(p, q, n) = 1$ & trivial\\
\midrule
$n = 6$ & $3$ & \begin{tabular}{@{}l@{}}$p = 1$ and $q = 2$ \\ (this is the only graph)\end{tabular} & $\mathrm{span}\left( \begin{bmatrix} 1\\ 0\\ 0\\ -1\\ 0\\ 0 \end{bmatrix}, \begin{bmatrix} 0\\ 1\\ 0\\ 0\\ -1\\ 0 \end{bmatrix}, \begin{bmatrix} 0\\ 0\\ 1\\ 0\\ 0\\ -1 \end{bmatrix} \right)$\\
\midrule
$n \ge 8, \, 4 \mid n$ & $1$ & \begin{tabular}{@{}l@{}}$p \not\equiv_2 q$ and \\ $\gcd(n, p+q) = \gcd(n, q-p) = 1$\end{tabular} & $\mathrm{span}\left( \begin{bmatrix} 1\\ -1\\ 1\\ -1\\ \vdots \\ 1\\ -1 \end{bmatrix} \right)$\\
\midrule
$n \ge 10, \, n \equiv_4 2$ & $0$ & $2 \nmid p, q$ and $\gcd(p, q, n) = 1$ & trivial\\
\bottomrule
\end{tabular}
}
\caption{The complete solution to the minimum nullity problem on $\mathcal{C}_n$.}      
\label{min_nullity_c_table}
\end{center}
\end{table}
\end{theorem}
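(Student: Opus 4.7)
The plan is to layer the connectivity criterion of Proposition \ref{con_property}, namely $\gcd(n, p, q) = 1$, on top of the case analysis already completed in the proof of Theorem \ref{min_nullity_g_th}. Because that theorem explicitly characterizes every minimizer of $\eta$ on $\mathcal{G}_n$, the task here reduces, row by row of Table \ref{min_nullity_g_table}, to checking whether at least one such minimizer is connected: if so, $\mathcal{N}_\mathrm{min}$ and the corresponding null spaces transfer verbatim to $\mathcal{C}_n$; if not, we must climb to the next attainable nullity value.

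For the first two rows of Table \ref{min_nullity_c_table} this is immediate. When $n$ is odd, Theorem \ref{main_theorem} forces $\eta = 0$ for every quartic circulant graph of order $n$, so $\mathcal{N}_\mathrm{min} = 0$ and the connected minimizers are exactly those with $\gcd(n, p, q) = 1$, all having trivial null space. When $n = 6$, the unique graph $\mathrm{Circ}(6, \{1, 2\})$ already satisfies $\gcd(6, 1, 2) = 1$, so it is connected and inherits its nullity and null space from Theorem \ref{min_nullity_g_th}.

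For $n \ge 8$ even, I would write $n = \beta \cdot 2^\alpha$ with $\alpha = v_2(n) \ge 1$ and $\beta$ odd, and recall from the proof of Theorem \ref{min_nullity_g_th} that every nullity-zero graph in $\mathcal{G}_n$ has the form $p = \gamma \cdot 2^{\alpha - 1}$, $q = \delta \cdot 2^{\alpha - 1}$ with $\gamma \equiv_2 \delta$. A short 2-adic computation then yields $v_2(\gcd(n, p, q)) \ge \alpha - 1$. When $4 \mid n$ (so $\alpha \ge 2$), this forces $\gcd(n, p, q) \ge 2$ and hence disconnectedness, so $\mathcal{N}_\mathrm{min} \ge 1$; combining Corollary \ref{nullity_one} with the existence of nut quartic circulants for every even $n \ge 8$ proved in \cite[Theorem~5]{Damnjanovic3}, and with the observation that a nut graph must be connected (since the null space of a disconnected graph splits as a direct sum across components and would therefore admit a non-full vector), yields $\mathcal{N}_\mathrm{min} = 1$ with the characterization and alternating null space of row three. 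When $n \equiv_4 2$ (so $\alpha = 1$ and $\beta \ge 5$), the conditions collapse to $p = \gamma$, $q = \delta$ with $\gamma \equiv_2 \delta$; the even-even option violates $\gcd(n, p, q) = 1$, while the odd-odd option produces exactly the row four conditions $2 \nmid p, q$ together with $\gcd(n, p, q) = 1$, and existence is witnessed by $(p, q) = (1, 3)$.

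The main point requiring explicit attention is the $4 \mid n$ case, specifically the brief lemma that nut graphs are automatically connected; beyond that, every step is a direct application of Theorem \ref{min_nullity_g_th}, Proposition \ref{con_property}, and Corollary \ref{nullity_one}.
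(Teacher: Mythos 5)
Your proposal is correct and follows essentially the same route as the paper: layering the connectivity criterion $\gcd(n,p,q)=1$ from Proposition \ref{con_property} onto the nullity-zero analysis underlying Theorem \ref{min_nullity_g_th}, showing for $4 \mid n$ that any nullity-zero graph would force $2 \mid \gcd(n,p,q)$ and hence be disconnected, and then invoking the existence of quartic circulant nut graphs, their automatic connectedness, Corollary \ref{nullity_one}, and the known alternating null space, while for $n \equiv_4 2$ reducing to odd $p,q$ with $\gcd(n,p,q)=1$ and the witness $\mathrm{Circ}(n,\{1,3\})$. The only difference is cosmetic: the paper rederives $2 \mid p$ and $2 \mid q$ directly from $v_2(p+q), v_2(q-p) \ge 2$ via Corollary \ref{singularity_cond}, whereas you reuse the $p = \gamma 2^{\alpha-1}$, $q = \delta 2^{\alpha-1}$ parametrization already obtained in Theorem \ref{min_nullity_g_th}.
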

\begin{proof}
    First of all, by implementing Proposition \ref{con_property}, we see that a quartic circulant graph $\mathrm{Circ}(n, \{p, q\})$ is connected if and only if $\gcd(n, p, q) = 1$. Now, if $n$ is odd, then Theorem \ref{main_theorem} implies that each connected quartic circulant graph of order $n$ has the nullity zero. The results displayed in the first row of Table \ref{min_nullity_c_table} follow immediately from here. Also, the edge case when $n = 6$ can be proved in an absolutely identical manner as done so in Theorem \ref{min_nullity_g_th}. Of course, this scenario corresponds to the second row of Table \ref{min_nullity_c_table}. Thus, we can suppose that $n \ge 8$ and $2 \mid n$ in the remainder of the proof.

    If $4 \mid n$, then it is not difficult to demonstrate that there does not exist a connected quartic circulant graph of order $n$ whose nullity is zero. Suppose that such a graph exists in the form of $\mathrm{Circ}(n, \{p, q\}), \, 1 \le p < q < \frac{n}{2}$. Since $v_2(n) \ge 2$, Corollary \ref{singularity_cond} tells us that $v_2(p + q) \ge 2$ and $v_2(q - p) \ge 2$ must both hold, which further implies
    \[
        4 \mid (p + q) - (q - p) \quad \implies \quad 4 \mid 2p \quad \implies \quad 2 \mid p,
    \]
    together with
    \[
        4 \mid (p + q) + (q - p) \quad \implies \quad 4 \mid 2q \quad \implies \quad 2 \mid q .
    \]
    However, this means that $n, p, q$ are surely all even, which yields $\gcd(n, p, q) \ge 2$, contradicting the fact that $\mathrm{Circ}(n, \{p, q\})$ is connected. Thus, each connected quartic circulant graph of order $n$ must be singular. Now, we know that there certainly exists a quartic circulant nut graph of order $n$, as shown by Damnjanovi\'c \cite[Theorem 5]{Damnjanovic3}. Besides that, it is simple to see that each nut graph must be connected, for if it had multiple components, then precisely one of them would have to be singular, which would quickly yield a non-full non-zero null space vector. Bearing this in mind, we get that the minimum nullity attainable by a graph from $\mathcal{C}_n$ is equal to one and the graphs attaining it are precisely those that are nut. Their null spaces are necessarily obtained via
    \[
        \mathrm{span}\left( \begin{bmatrix} 1 & -1 & 1 & -1 & \cdots & 1 & -1 \end{bmatrix}^\mathrm{T} \right) ,
    \]
    as discussed earlier. The noted observations correspond to the third row of Table~\ref{min_nullity_c_table}.

    Finally, suppose that $n \equiv_4 2$ and $n \ge 10$. In this case, it is not difficult to see that a graph $\mathrm{Circ}(n, \{p, q\})$ is of nullity zero if and only if $v_2(p + q) \ge 1$ and $v_2(q - p) \ge 1$, by virtue of Corollary \ref{singularity_cond}. In other words, this graph has the nullity zero if and only if $p$ and $q$ are of the same parity. Of course, it is impossible for $p$ and $q$ to be even, since this would mean that $\gcd(n, p, q) \ge 2$, contradicting the fact that $\mathrm{Circ}(n, \{p, q\}) \in \mathcal{C}_n$. Thus, a quartic circulant graph $\mathrm{Circ}(n, \{p, q\})$ is connected and has the nullity zero if and only if $2 \nmid p, q$ and $\gcd(n, p, q) = 1$. It is obvious that such graphs exist --- for example, $\mathrm{Circ}(n, \{1, 3\})$ satisfies both of these criteria. From here, it immediately follows that the minimum nullity attainable on $\mathcal{C}_n$ is equal to zero and that the graphs attaining it all possess a trivial null space, as disclosed in Table \ref{min_nullity_c_table}.
\end{proof}

\section{Maximum nullity problem}\label{sc_nullity_max}

We shall end the paper by giving a full solution to the quartic circulant graph maximum nullity problem. In a similar manner as in Section \ref{sc_nullity_min}, the goals of the aforementioned study will be to
\newpage
\begin{itemize}
    \item compute the maximum nullity $\mathcal{N}_\mathrm{max}$,
    \item find all the graphs attaining the maximum nullity,
    \item determine the null spaces of all the aforementioned graphs,
\end{itemize}
for both $\mathcal{G}_n$ and $\mathcal{C}_n$, for each $n \ge 5$. In order to concisely carry out the desired study, we will be in need of the following three auxiliary lemmas.

\begin{lemma}\label{max_aux_lemma_1}
    A quartic circulant graph $G = \mathrm{Circ}(n, \{p, q\}),\, 1 \le p < q < \frac{n}{2}$ where $n \ge 6, \, 2 \mid n, \, 8 \nmid n$ and $p + q = \frac{n}{2}$, has the nullity $\eta(G) = \frac{n}{2}$ and its null space can be described via the expression
    \begin{equation}\label{max_aux_lemma_formula}
        \mathcal{N}(G) = \left\{ \begin{bmatrix} v_0 & v_1 & \cdots & v_{n-1} \end{bmatrix}^\mathrm{T} \colon v_{j + \frac{n}{2}} = -v_j \ \left( \forall j = \overline{0, \frac{n}{2}-1} \right) \right\} . 
    \end{equation}
\end{lemma}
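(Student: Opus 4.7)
The plan is to split the lemma into two claims: the nullity count, which I would handle by invoking Theorem \ref{main_theorem} as a black box, and the null space description, which I would obtain by exhibiting an explicit $n/2$-dimensional subspace of $\mathcal{N}(G)$ and invoking the dimension count to conclude equality.

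For the nullity, I would set $p + q = n/2$ in the notation of Theorem~\ref{main_theorem} and compute $\eta^{(1)} = \gcd(n, n/2) = n/2$ with $v_2^{(1)} = v_2(n) - 1 < v_2(n) = v_2^{(3)}$. The hypothesis $8 \nmid n$ leaves only $v_2(n) \in \{1, 2\}$. When $v_2(n) = 1$, both $p+q$ and $q-p$ are odd, so $v_2^{(1)} = v_2^{(2)} = 0$ and the fifth branch of Eq.\ (\ref{main_nullity_formula}) applies, giving $\eta^{(1)} + \eta^{(2)} - \eta^{(3)} = n/2$ after noting that $\eta^{(2)} = \eta^{(3)} = \gcd(n/2, q-p)$. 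When $v_2(n) = 2$, the value $v_2(q-p)$ is either $1$ (same branch as above) or $\ge 2$ (third branch), and in both subcases a short calculation reduces the output to $n/2$. This exhausts the possibilities and settles $\eta(G) = n/2$.

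For the null space, I would observe that the right-hand side of Eq.\ (\ref{max_aux_lemma_formula}) is plainly a subspace of dimension $n/2$, since the entries $v_0, v_1, \ldots, v_{n/2-1}$ may be chosen freely and then determine the remaining coordinates. Since this dimension already matches the nullity, it suffices to show the inclusion in $\mathcal{N}(G)$. For any vector $v$ satisfying $v_{j+n/2} = -v_j$, I would write out
\[
(A_G v)_i = v_{i+p} + v_{i-p} + v_{i+q} + v_{i-q}
\]
with indices taken modulo $n$, and then use the relation $q = n/2 - p$ together with the antisymmetry to rewrite $v_{i+q} = v_{i-p+n/2} = -v_{i-p}$ and $v_{i-q} = v_{i+p-n/2} = -v_{i+p}$, making the four terms cancel in pairs.

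There is no real obstacle here; the nullity part is a mechanical case split against Theorem~\ref{main_theorem}, and the null space part is a one-line cancellation. The only mild subtlety is keeping the $2$-adic bookkeeping clean in the $v_2(n) = 2$ case, where one must check that the contributions of $\eta^{(2)}$ and $\eta^{(3)}$ either both appear and cancel or drop out together; writing $q - p = 2r$ and separating by the parity of $r$ makes this transparent.
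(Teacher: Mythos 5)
Your argument is correct, but it takes a different route from the paper. You obtain the nullity by specializing Theorem \ref{main_theorem} (a $2$-adic case split on $v_2(n) \in \{1,2\}$, using $\gcd(n,p+q) = \frac{n}{2}$ and checking that the $\eta^{(2)}$ and $\eta^{(3)}$ contributions cancel or drop out), and then pin down the null space by exhibiting the antisymmetric subspace $\{v : v_{j+\frac{n}{2}} = -v_j\}$, verifying the pairwise cancellation $(A_G v)_i = v_{i+p} + v_{i-p} - v_{i-p} - v_{i+p} = 0$ via $q = \frac{n}{2} - p$, and matching dimensions. The paper instead works directly with the linear system $A_G u = 0$: the substitution $w_j = u_j + u_{j+\frac{n}{2}}$ collapses it to $w_j + w_{j+q-p} = 0$ modulo $\frac{n}{2}$, and the hypothesis $8 \nmid n$ forces $w \equiv 0$ by iterating this relation an odd number of times, which yields the nullity and the null space simultaneously without invoking Theorem \ref{main_theorem}. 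Your version is shorter given the formula and makes the role of $8 \nmid n$ visible purely in the $2$-adic bookkeeping; the paper's version is self-contained and, more importantly, sets up exactly the $w$-system machinery that is reused in Lemma \ref{max_aux_lemma_2} for the case $8 \mid n$, where the formula-plus-containment shortcut would not identify the two extra null space generators. One small slip: in your subcase $v_2(n)=2$, $v_2(q-p) \ge 2$, the applicable clause of Eq.\ (\ref{main_nullity_formula}) is the second one (returning $\eta^{(1)} = \frac{n}{2}$), not the third (which would return $\gcd(n,q-p) < \frac{n}{2}$ and is excluded since $v_2^{(1)} < v_2^{(3)}$); the conclusion you state is the right one, so this is only a labelling error.
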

\begin{proof}
    The elements of $\mathcal{N}(G)$ are actually the solutions to the equation
    \begin{equation}\label{max_aux_1}
        A_G \, u = 0
    \end{equation}
    in $u = \begin{bmatrix} u_0 & u_1 & \cdots & u_{n-1} \end{bmatrix}^\mathrm{T}$. However, Eq.\ (\ref{max_aux_1}) is directly equivalent to
    \[
        u_{j+p} + u_{j+q} + u_{j+n-q} + u_{j+n-p} = 0 \qquad (\forall j = \overline{0, n-1}),
    \]
    where the computation of indices is done modulo $n$. Now, if we put $w_j = u_j + u_{j + \frac{n}{2}}$ for each $j = \overline{0, \frac{n}{2}-1}$, we quickly obtain
    \[
        u_{j+p} + u_{j+q} + u_{j+n-q} + u_{j+n-p} =  u_{j+p} + u_{j+q} + u_{j+\frac{n}{2} + p} + u_{j+\frac{n}{2} + q} = w_{j + p} + w_{j + q} ,
    \]
    which means that Eq.\ (\ref{max_aux_1}) gets down to solving the system of equations
    \begin{equation}\label{max_aux_2}
        w_j + w_{j + q - p} = 0 \qquad \left( \forall j = \overline{0, \frac{n}{2}-1} \right)
    \end{equation}
    in $w = \begin{bmatrix} w_0 & w_1 & \cdots & w_{\frac{n}{2}-1} \end{bmatrix}^\mathrm{T}$, with the indices being computed modulo $\frac{n}{2}$. 
    
    Now, it is straightforward to notice that Eq.\ (\ref{max_aux_2}) would certainly imply
    \begin{equation}\label{max_aux_3}
        w_{j + \alpha(q - p)} = (-1)^\alpha \, w_j \qquad \left( \forall j = \overline{0, \frac{n}{2}-1},\, \forall \alpha \in \mathbb{Z} \right).
    \end{equation}
    \newpage\noindent
    Bearing in mind that $p + q = \frac{n}{2}$ and $q - p$ are surely of the same parity, together with $8 \nmid n$, we conclude that either $v_2(q-p) = v_2 \left( \frac{n}{2} \right) = 0$ or $v_2(q-p) \ge 1 = v_2\left( \frac{n}{2} \right)$. This means that the value $\dfrac{\frac{n}{2}}{\gcd\left(\frac{n}{2}, q-p\right)}$ must be odd, hence if we put $\alpha = \dfrac{\frac{n}{2}}{\gcd\left(\frac{n}{2}, q-p\right)}$, it follows that
    \[
        \alpha(q - p) = \dfrac{\frac{n}{2}}{\gcd\left(\frac{n}{2}, q-p\right)} \cdot (q - p) = \frac{n}{2} \cdot \dfrac{q - p}{\gcd\left(\frac{n}{2}, q-p\right)} \quad \implies \quad \frac{n}{2} \mid \alpha(q-p),
    \]
    together with
    \[
        (-1)^\alpha w_j = -w_j .
    \]
    Thus, Eq.\ (\ref{max_aux_3}) swiftly transforms to $w_j = -w_j$. For this reason, we get that the only solution to Eq.\ (\ref{max_aux_2}) is achieved when all the $w_j$ values are zero. Now, if we go back to solving Eq.\ (\ref{max_aux_1}), we deduce that its set of solutions coincides precisely with the vector subspace given in Eq.\ (\ref{max_aux_lemma_formula}). It is obvious that the obtained null space is $\frac{n}{2}$-dimensional, which completes the proof.
\end{proof}

\begin{lemma}\label{max_aux_lemma_2}
    A connected quartic circulant graph $G = \mathrm{Circ}(n, \{p, q\}),\, 1 \le p < q < \frac{n}{2}$ where $8 \mid n$ and $p + q = \frac{n}{2}$, has the nullity $\eta(G) = \frac{n}{2} + 2$ and its null space can be described via the expression
    \begin{align}\label{max_aux_lemma_formula_2}
        \begin{split}
        \mathcal{N}(G) = \mathrm{span} \bigg( & e_0 - e_\frac{n}{2}, e_1 - e_{\frac{n}{2} + 1}, e_2 - e_{\frac{n}{2} + 2}, \ldots,  e_{\frac{n}{2} - 1} - e_{n-1}, \\
        &\sum_{j = 0}^{\frac{n}{4}-1} (-1)^j \, e_{\frac{n}{2} + 2j}, \sum_{j = 0}^{\frac{n}{4}-1} (-1)^j \, e_{\frac{n}{2} + 2j+1} \bigg) ,
        \end{split}
    \end{align}
    where $e_j \in \mathbb{R}^n$ represents the vector whose $j$-th element equals one, while all the others are zero.
\end{lemma}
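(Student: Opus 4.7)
The strategy is to mirror the reduction carried out in Lemma~\ref{max_aux_lemma_1}. Setting $w_j = u_j + u_{j+n/2}$ for $0 \le j < \frac{n}{2}$ and using the congruences $n - p \equiv \frac{n}{2} + q$ and $n - q \equiv \frac{n}{2} + p \pmod{n}$ (both following from $p + q = \frac{n}{2}$), the equation $A_G u = 0$ is again equivalent to
\[
w_k + w_{k + q - p} \equiv 0 \pmod{\tfrac{n}{2}} \qquad \left(\forall k = \overline{0, \tfrac{n}{2}-1}\right) .
\]
Since the linear map $u \mapsto (u_j + u_{j+n/2})_j$ is surjective onto $\mathbb{R}^{n/2}$ with kernel of dimension $\frac{n}{2}$ spanned by the twin-difference vectors $e_j - e_{j+n/2}$ for $0 \le j < \frac{n}{2}$, it follows that $\dim \mathcal{N}(G) = \frac{n}{2} + \dim W$, where $W$ denotes the solution space of the above $w$-system. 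Hence the task reduces to showing $\dim W = 2$ and producing explicit lifts of a basis of $W$.

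Next, I would invoke connectivity. Proposition~\ref{con_property} gives $\gcd(n, p, q) = 1$, which combined with $q = \frac{n}{2} - p$ yields $\gcd\bigl(\frac{n}{2}, p\bigr) = 1$; because $\frac{n}{2}$ is even, both $p$ and $q$ must be odd. As $q - p \equiv -2p \pmod{n/2}$, a short divisibility check then yields $d := \gcd\bigl(\frac{n}{2}, q - p\bigr) = 2$. Consequently, the shift $k \mapsto k + (q - p)$ on $\mathbb{Z}/(n/2)\mathbb{Z}$ decomposes into exactly two orbits of length $\frac{n}{2d} = \frac{n}{4}$, namely the even and the odd residues modulo $\frac{n}{2}$. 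Because $8 \mid n$, this orbit length is even, so---unlike in Lemma~\ref{max_aux_lemma_1}---iterating $w_{k+(q-p)} = -w_k$ around an orbit is consistent, and each of the two orbits contributes one linearly independent solution to $W$, so $\dim W = 2$.

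Finally, to match the specific vectors displayed in Eq.~(\ref{max_aux_lemma_formula_2}), I would track the signs explicitly. Using the congruence $(q-p)/2 \equiv -p \pmod{n/4}$ together with $p$ odd, one verifies that the orbit-based prescription $w_{\alpha(q-p)} = (-1)^\alpha w_0$ simplifies on the even orbit to $w_{2k} = (-1)^k$, and analogously to $w_{2k+1} = (-1)^k$ on the odd orbit. Lifting each of the two $W$-basis vectors to $u$-space by placing all mass on indices in $\{\frac{n}{2}, \frac{n}{2} + 1, \ldots, n-1\}$ then produces precisely $\sum_{j=0}^{n/4 - 1}(-1)^j e_{n/2 + 2j}$ and $\sum_{j=0}^{n/4 - 1}(-1)^j e_{n/2 + 2j + 1}$. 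Adjoining these two lifts to the $\frac{n}{2}$ kernel vectors $e_j - e_{j+n/2}$ yields the spanning set of Eq.~(\ref{max_aux_lemma_formula_2}); linear independence is immediate from a coordinate inspection, since the twin vectors are supported symmetrically in pairs $\{j, j + n/2\}$ while the two new vectors are supported only on the second half. The main obstacle I anticipate is this last bookkeeping step: reconciling the orbit-based sign indexing (by $\alpha$) with the position-based sign indexing (by $k$) requires careful use of both $q - p \equiv -2p \pmod{n/2}$ and $4 \mid \frac{n}{2}$, neither of which was available in Lemma~\ref{max_aux_lemma_1}.
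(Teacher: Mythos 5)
Your proposal is correct and follows essentially the same route as the paper: the same substitution $w_j = u_j + u_{j+\frac{n}{2}}$ reducing $A_G u = 0$ to $w_k + w_{k+q-p} = 0$ over $\mathbb{Z}/\frac{n}{2}\mathbb{Z}$, the same use of connectivity to conclude that $p, q$ are odd and $\gcd\left(\frac{n}{2}, q-p\right) = 2$, and the same resulting two-parameter solution family lifted to the stated basis. The only difference is cosmetic: you solve the $w$-system by an orbit-consistency count (using that the orbit length $\frac{n}{4}$ is even) and a preimage-dimension argument, whereas the paper picks an odd B\'ezout coefficient to reduce the recurrence to $w_{j+2} = -w_j$ and writes the solutions explicitly; both give $\dim = \frac{n}{2} + 2$ and the same spanning set.
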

\begin{proof}
    The elements of $\mathcal{N}(G)$ are the solutions to the equation Eq.\ (\ref{max_aux_1}), which can quickly be transformed into Eq.\ (\ref{max_aux_2}) in an absolutely identical manner as it was done in Lemma \ref{max_aux_lemma_1}. Now, due to the fact that $p + q$ is even, it is clear that $p$ and $q$ must be of the same parity. However, it is impossible for $p$ and $q$ to be even, since this would imply $\gcd(n, p, q) \ge 2$, contradicting the fact that $G$ is connected, in accordance with Proposition \ref{con_property}. This means that $p$ and $q$ must both be odd. Due to the fact that $4 \mid p + q$, it becomes straightforward to see that $q - p \equiv_4 2$.

    Since we know that
    \begin{align*}
        \gcd\left(q-p, \frac{n}{2}\right) &\mid (p + q) + (q - p)\quad \implies \quad \gcd\left(q-p, \frac{n}{2}\right) \mid 2q,\\
        \gcd\left(q-p, \frac{n}{2}\right) &\mid (p + q) - (q - p)\quad \implies \quad \gcd\left(q-p, \frac{n}{2}\right) \mid 2p,\\
        \gcd\left(q-p, \frac{n}{2}\right) &\mid 2n,
    \end{align*}
    we immediately conclude that $\gcd\left(q-p, \frac{n}{2}\right) \mid \gcd(2n, 2p, 2q)$. However, the connectedness of $G$ implies $\gcd(n, p, q) = 1$, while $q-p$ and $\frac{n}{2}$ are both clearly even, which means that $\gcd\left(q-p, \frac{n}{2}\right) = 2$ must be true. For this reason, there must exist some integers $\alpha, \beta \in \mathbb{Z}$ such that
    \begin{equation}\label{max_aux_4}
        \alpha (q - p) + \beta \, \frac{n}{2} = 2 .
    \end{equation}
    Of course, $\alpha$ needs to be odd, for if it were even, then the left-hand side of Eq.~(\ref{max_aux_4}) would be divisible by four, which is not possible. Now, Eq.\ (\ref{max_aux_2}) would certainly imply Eq.\ (\ref{max_aux_3}), and by plugging in the aforementioned value of $\alpha$, we reach
    \begin{equation}\label{max_aux_5}
        w_{j + 2} = -w_j \qquad \left( \forall j = \overline{0, \frac{n}{2}-1} \right) .
    \end{equation}
    Due to $q - p \equiv_4 2$, the converse is straightforward to check, hence Eq.\ (\ref{max_aux_2}) is necessarily equivalent to Eq.\ (\ref{max_aux_5}).

    Since $4 \mid \frac{n}{2}$, it is trivial to notice that the set of solutions to the system of equations Eq.\ (\ref{max_aux_5}) in $w = \begin{bmatrix} w_0 & w_1 & \cdots & w_{\frac{n}{2}-1} \end{bmatrix}^\mathrm{T}$ is given by
    \begin{alignat*}{2}
        w_{2j} &= (-1)^j \, C_1 \qquad && \left( \forall j = \overline{0, \frac{n}{4} - 1} \right),\\
        w_{2j + 1} &= (-1)^j \, C_2 \qquad && \left( \forall j = \overline{0, \frac{n}{4} - 1} \right),
    \end{alignat*}
    where $C_1, C_2 \in \mathbb{R}$ are two arbitrary constants. By going back to the original system Eq.\ (\ref{max_aux_1}), it is not difficult to see that solution set can be thought of as
    \begin{alignat*}{2}
        u_j &= u_j \qquad & \left(\forall j = \overline{0, \frac{n}{2}-1}\right),\\
        u_{\frac{n}{2} + 2j} &= (-1)^j \, C_1 - u_{2j} \qquad & \left(\forall j = \overline{0, \frac{n}{4}-1}\right),\\
        u_{\frac{n}{2} + 2j + 1} &= (-1)^j \, C_2 - u_{2j + 1} \qquad & \left(\forall j = \overline{0, \frac{n}{4}-1}\right),
    \end{alignat*}
    where $u_0, u_1, \ldots, u_{\frac{n}{2}-1}, C_1, C_2$ are regarded as arbitrary constants. From here, it immediately follows that the given solution set coincides with the vector subspace given in Eq.\ (\ref{max_aux_lemma_formula_2}). Also, it is simple to realize that the obtained null space must be of dimension $\frac{n}{2} + 2$, given the fact that the $\frac{n}{2} + 2$ vectors given in Eq.\ (\ref{max_aux_lemma_formula_2}) form a linearly independent set.
\end{proof}

\begin{lemma}\label{max_aux_lemma_3}
    The quartic circulant graph $G = \mathrm{Circ}\left(n, \left\{\frac{n}{8}, \frac{3n}{8}\right\} \right)$, where $8 \mid n$, has the nullity $\eta(G) = \frac{3n}{4}$ and its null space can be described via the expression
    \begin{equation}\label{max_aux_lemma_formula_3}
        \mathcal{N}(G) = \left\{ \begin{bmatrix} v_0\\ v_1\\ \vdots\\ v_{n-1} \end{bmatrix} \colon v_{j + \frac{3n}{4}} = -v_j - v_{j+\frac{n}{4}} - v_{j+\frac{n}{2}} \ \left( \forall j = \overline{0, \frac{n}{4}-1} \right) \right\} .
    \end{equation}
\end{lemma}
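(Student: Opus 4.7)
The plan is to compute $\eta(G)$ directly from Theorem~\ref{main_theorem} and then derive the explicit description of $\mathcal{N}(G)$ by reducing the adjacency equation $A_G u = 0$ to a cyclic shift-by-$\frac{n}{4}$ constraint. First I would verify the nullity count: with $p = \frac{n}{8}$ and $q = \frac{3n}{8}$, we have $p+q = \frac{n}{2}$ and $q-p = \frac{n}{4}$, so $\eta^{(1)} = \frac{n}{2}$, $\eta^{(2)} = \frac{n}{4}$, $\eta^{(3)} = \frac{n}{4}$, while $v_2^{(1)} = v_2(n)-1$, $v_2^{(2)} = v_2(n)-2$ and $v_2^{(3)} = v_2(n) \ge 3$. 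Thus $v_2^{(1)}, v_2^{(2)} < v_2^{(3)}$ and $v_2^{(1)} \ne v_2^{(2)}$, placing us in the fourth branch of Eq.~(\ref{main_nullity_formula}) and yielding $\eta(G) = \eta^{(1)} + \eta^{(2)} = \frac{3n}{4}$.

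Next I would unpack $A_G u = 0$, which is exactly
\[
    u_{j+\frac{n}{8}} + u_{j+\frac{3n}{8}} + u_{j+\frac{5n}{8}} + u_{j+\frac{7n}{8}} = 0 \qquad \left(\forall j = \overline{0, n-1}\right),
\]
with indices modulo $n$. Re-indexing via $k = j + \frac{n}{8}$ gives the much cleaner system
\[
    u_k + u_{k+\frac{n}{4}} + u_{k+\frac{n}{2}} + u_{k+\frac{3n}{4}} = 0 \qquad \left(\forall k = \overline{0, n-1}\right).
\]

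The key observation is that this set of $n$ equations is highly redundant: replacing $k$ by $k + \frac{n}{4}$ merely cyclically permutes the four terms $u_k, u_{k+n/4}, u_{k+n/2}, u_{k+3n/4}$ among themselves, yielding the exact same linear constraint. Hence only the equations indexed by $k = \overline{0, \frac{n}{4}-1}$ are independent, and each of them uniquely determines $u_{k+3n/4}$ in terms of $u_k$, $u_{k+n/4}$, $u_{k+n/2}$, producing precisely the constraint in Eq.~(\ref{max_aux_lemma_formula_3}).

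Finally, I would conclude by noting that this representation leaves the first $\frac{3n}{4}$ coordinates $u_0, u_1, \ldots, u_{\frac{3n}{4}-1}$ as free parameters, with the remaining $\frac{n}{4}$ coordinates uniquely determined by them. The solution set therefore coincides with the subspace described in Eq.~(\ref{max_aux_lemma_formula_3}) and has dimension exactly $\frac{3n}{4}$, matching the nullity computed above. I do not anticipate any real obstacle here: the only subtlety is confirming the shift-redundancy of the $n$ constraints, and that follows from the chain $\{k, k+\frac{n}{4}, k+\frac{n}{2}, k+\frac{3n}{4}\}$ being stable under translation by $\frac{n}{4}$.
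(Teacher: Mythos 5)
Your proposal is correct and follows essentially the same route as the paper: reduce $A_G u = 0$ to the shifted system $u_k + u_{k+\frac{n}{4}} + u_{k+\frac{n}{2}} + u_{k+\frac{3n}{4}} = 0$, observe that only the constraints for $k = \overline{0, \frac{n}{4}-1}$ are distinct, and read off the parametrization and dimension $\frac{3n}{4}$. Your additional cross-check of the nullity via Theorem~\ref{main_theorem} is a harmless bonus the paper omits, and your explicit justification of the shift-redundancy fills in a detail the paper treats as evident.
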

\begin{proof}
    Once more, the elements of $\mathcal{N}(G)$ represent the solutions to the system of equations Eq.~(\ref{max_aux_1}). In this scenario, the given system transforms to
    \[
        u_{j + \frac{n}{8}} + u_{j+\frac{3n}{8}} + u_{j+\frac{5n}{8}} + u_{j+\frac{7n}{8}} = 0 \qquad (\forall j = \overline{0, n-1}),
    \]
    which is equivalent to
    \[
        u_j + u_{j+\frac{n}{4}} + u_{j+\frac{n}{2}} + u_{j+\frac{3n}{4}} = 0 \qquad (\forall j = \overline{0, n-1}) .
    \]
    From here, it becomes evident that the solution set must be equal to the vector subspace given in Eq.\ (\ref{max_aux_lemma_formula_3}). Also, it is obvious that $\mathcal{N}(G)$ is of dimension $\frac{3n}{4}$, which completes the proof.
\end{proof}

We now turn our attention to the maximum nullity problem on the set $\mathcal{G}_n$ and disclose the complete solution to the said problem in the form of the following theorem.

\begin{theorem}\label{max_nullity_g_th}
    For a given positive integer $n \in \mathbb{N},\, n \ge 5$, the maximum nullity that a quartic circulant graph of order $n$ can achieve is given in the second column of Table \ref{max_nullity_g_table}. Also, the graphs $\mathrm{Circ}(n, \{p, q\}), \, 1 \le p < q < \frac{n}{2}$, attaining the maximum nullity are characterized by the conditions displayed in the third column, while their corresponding null spaces are disclosed in the fourth column.

\begin{table}[ht]
\begin{center}
{\scriptsize
\begin{tabular}{llll}
\toprule order & $\mathcal{N}_\mathrm{max}$ & graphs attaining $\mathcal{N}_\mathrm{max}$ & corresponding null space \\
\midrule
$n \ge 5, \, 2 \nmid n$ & $0$ & $p$ and $q$ are arbitrary & trivial\\
\midrule
$n \ge 6, \, 2 \mid n,\, 8 \nmid n$ & $\frac{1}{2}n$ & $p + q = \frac{n}{2}$ & see Lemma \ref{max_aux_lemma_1}\\
\midrule
$n \ge 8, \, 8 \mid n$ & $\frac{3}{4}n$ & $p = \frac{1}{8}n$ and $q = \frac{3}{8}n$ & see Lemma \ref{max_aux_lemma_3}\\
\bottomrule
\end{tabular}
}
\caption{The complete solution to the maximum nullity problem on $\mathcal{G}_n$.}      
\label{max_nullity_g_table}
\end{center}
\end{table}
\end{theorem}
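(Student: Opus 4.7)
The plan is to treat the three rows of Table \ref{max_nullity_g_table} in turn and, within each even-$n$ row, to separate achievability (supplied by the auxiliary lemmas) from the matching upper bound (obtained by walking through the five cases of the nullity formula in Theorem \ref{main_theorem}). Two elementary observations will drive the whole argument: writing $\eta^{(1)} = \gcd(n, p+q)$ and $\eta^{(2)} = \gcd(n, q-p)$ as in Theorem \ref{main_theorem}, the bound $p+q \le n-3 < n$ combined with $\eta^{(1)} \mid n$ yields $\eta^{(1)} \le n/2$ with equality iff $p+q = n/2$, while $0 < q-p < n/2$ combined with $\eta^{(2)} \mid q-p$ forbids $\eta^{(2)} = n/2$ outright.

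For odd $n$, Theorem \ref{main_theorem} immediately gives $\eta(G) = 0$, matching the first row. Now fix even $n \ge 6$ with $8 \nmid n$. Lemma \ref{max_aux_lemma_1} already supplies the achievability of $n/2$ together with the shape of the null space. For the matching upper bound, I would first show that case 4 of Eq.~(\ref{main_nullity_formula}) cannot arise when $v_2(n) \in \{1, 2\}$: both $v_2^{(1)}, v_2^{(2)} < v_2(n) \le 2$ being distinct would force one of them to be $0$ and the other $1$, yet $p+q$ and $q-p$ always share the same parity. In cases 2 and 3 one simply reads $\eta(G) = \eta^{(1)} \le n/2$ or $\eta(G) = \eta^{(2)} < n/2$. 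In case 5, the subcase $\eta^{(1)} = n/2$ forces $\eta^{(2)} \mid \eta^{(1)}$ and hence $\eta^{(3)} = \eta^{(2)}$, so the formula collapses to $\eta(G) = n/2$; in every other subcase both $\eta^{(1)}$ and $\eta^{(2)}$ are confined to proper divisors of $n/2$ sharing its $2$-adic valuation, which are bounded by $n/6$, giving $\eta(G) \le n/3 < n/2$.

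For $8 \mid n$ and $n \ge 8$, Lemma \ref{max_aux_lemma_3} directly establishes that $\mathrm{Circ}(n, \{n/8, 3n/8\})$ attains the nullity $3n/4$ with the stated null space. For the upper bound, the same analysis yields $\eta(G) \le n/2 < 3n/4$ in cases 2, 3, and 5 (the latter handled exactly as above, with the cancellation $\eta^{(3)} = \eta^{(2)}$ whenever $\eta^{(1)} = n/2$). The crucial case is case 4: both $\eta^{(1)}$ and $\eta^{(2)}$ divide $n/2$ and have distinct $2$-adic valuations, with $\eta^{(2)} \ne n/2$. If $\eta^{(1)} = n/2$, then $v_2(\eta^{(2)}) \le v_2(n) - 2$, so $\eta^{(2)} \mid n/4$ and $\eta^{(2)} \le n/4$, giving $\eta(G) \le 3n/4$; equality forces $\eta^{(1)} = n/2$ and $\eta^{(2)} = n/4$, equivalently $p+q = n/2$ and $q-p = n/4$, whose unique solution is $p = n/8$, $q = 3n/8$. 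If instead $\eta^{(1)} < n/2$, then $\eta^{(1)} \le n/4$, and since $\eta^{(2)} \ne n/2$ likewise gives $\eta^{(2)} \le n/4$, we obtain $\eta(G) \le n/2 < 3n/4$.

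I expect the main obstacle to be the fine case 5 bookkeeping: showing that $\eta^{(1)} + \eta^{(2)} - \eta^{(3)}$ cannot exceed $n/2$ when $v_2^{(1)} = v_2^{(2)}$ requires exploiting that the only way for $\eta^{(1)}$ or $\eta^{(2)}$ to reach $n/2$ is when the other divides it, which then collapses the formula via $\eta^{(3)} = \eta^{(2)}$. Once this cancellation is pinned down, both the second and third rows of Table \ref{max_nullity_g_table} fall out cleanly by combining the divisor bounds with the achievability supplied by Lemmas \ref{max_aux_lemma_1} and \ref{max_aux_lemma_3}.
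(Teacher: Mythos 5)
Your proposal follows essentially the same route as the paper: achievability and the null-space descriptions come from Lemmas \ref{max_aux_lemma_1} and \ref{max_aux_lemma_3}, while the matching upper bounds come from a case analysis of the formula in Theorem \ref{main_theorem}, with the same key observations the paper uses ($\eta^{(1)} = \tfrac{n}{2}$ iff $p+q = \tfrac{n}{2}$, the cancellation $\eta^{(3)} = \eta^{(2)}$ when $p+q = \tfrac{n}{2}$, and, for $8 \mid n$, $\eta^{(1)} \le \tfrac{n}{2}$, $\eta^{(2)} \le \tfrac{n}{4}$ with equality pinning down $p = \tfrac{n}{8}$, $q = \tfrac{3n}{8}$). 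One intermediate claim is off: in case 5 with $\eta^{(1)} < \tfrac{n}{2}$ you assert that $\eta^{(1)}, \eta^{(2)}$ are proper divisors of $\tfrac{n}{2}$ sharing its $2$-adic valuation, hence at most $\tfrac{n}{6}$. This fails whenever $v_2(p+q) = v_2(q-p) < v_2(n) - 1$, since then $v_2(\eta^{(i)}) < v_2\!\left(\tfrac{n}{2}\right)$; for instance $n = 12$, $p = 1$, $q = 2$ gives $\eta^{(1)} = 3 = \tfrac{n}{4} > \tfrac{n}{6}$. The conclusion survives with the weaker estimate the paper itself uses: in this subcase $\eta^{(1)}$ and $\eta^{(2)}$ are divisors of $\tfrac{n}{2}$ different from $\tfrac{n}{2}$, so each is at most $\tfrac{n}{4}$ and $\eta^{(1)} + \eta^{(2)} - \eta^{(3)} \le \tfrac{n}{4} + \tfrac{n}{4} - \eta^{(3)} < \tfrac{n}{2}$, which is all that is needed in both the $8 \nmid n$ and $8 \mid n$ rows. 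With that one estimate repaired, your argument is complete and matches the paper's proof.
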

\begin{proof}
    Similarly as in the proofs of Theorems \ref{min_nullity_g_th} and \ref{min_nullity_c_th}, we have that if $n$ is odd, then each quartic circulant graph of order $n$ surely has the nullity zero. This observation quickly proves the validity of the first row in Table \ref{max_nullity_g_table}.

    \newpage
    Suppose that $n$ is even and let $G = \mathrm{Circ}(n, \{p, q\})$ be a given graph. We shall now prove that $\eta(G) \le \frac{n}{2}$ if $8\nmid n$ and $\eta(G) \le \frac{3n}{4}$ if $8 \mid n$. Afterwards, we will quickly find all the graphs attaining the maximum nullity and determine their corresponding null spaces. In order to make the remainder of the proof more concise, we will split the analysis of $G$ into four cases.

    \bigskip\noindent
    \emph{Case $v_2(p + q) \ge v_2(n)$ and $v_2(q - p) \ge v_2(n)$}.\quad
    In this case, Theorem \ref{main_theorem} claims that $\eta(G) = 0$, hence $\eta(G) < \frac{n}{2} < \frac{3n}{4}$.

    \bigskip\noindent
    \emph{Case $v_2(p + q) < v_2(n)$ and $v_2(q - p) \ge v_2(n)$}.\quad
    Here, Theorem \ref{main_theorem} directly gives us $\eta(G) = \gcd(n, p + q)$. Since $v_2(p + q) < v_2(n)$, we further get
    \begin{equation}\label{aux_9}
        \eta(G) = \gcd\left(\frac{n}{2}, p + q \right) \le \frac{n}{2} .
    \end{equation}
    Moreover, since $1 \le p < q < \frac{n}{2}$, it is easy to check that the only way for the equality in Eq.\ (\ref{aux_9}) to hold is if $p + q = \frac{n}{2}$. For this reason, we conclude that in the given case, either $\eta(G) < \frac{n}{2}$, or $\eta(G) = \frac{n}{2}$ alongside $p + q = \frac{n}{2}$.

    \bigskip\noindent
    \emph{Case $v_2(p + q) \ge v_2(n)$ and $v_2(q - p) < v_2(n)$}.\quad
    In this case, Theorem \ref{main_theorem} claims that $\eta(G) = \gcd(n, q - p)$. Similarly as in the previous case, we obtain
    \begin{equation}\label{aux_8}
        \eta(G) = \gcd\left(\frac{n}{2}, q - p \right) \le \frac{n}{2} .
    \end{equation}
    However, due to the fact that $1 \le q - p \le \frac{n}{2} - 1$, it is obvious that the equality can never hold in Eq.\ (\ref{aux_8}). Hence, in this case, we get that the nullity is strictly smaller than $\frac{n}{2}$.

    \bigskip\noindent
    \emph{Case $v_2(p + q) < v_2(n)$ and $v_2(q - p) < v_2(n)$}.\quad
    First of all, suppose that $8 \nmid n$. In this scenario, we have that $v_2(n) \le 2$, which means that $0 \le v_2(p + q), v_2(q - p) \le 1$. However, since $p + q$ and $q - p$ are of the same parity, we obtain that either $v_2(p + q) = v_2(q - p) = 1$ or $v_2(p + q) = v_2(q - p) = 0$. Now, by implementing Theorem \ref{main_theorem}, we get that
    \[
        \eta(G) = \gcd(n, p+q) + \gcd(n, q-p) - \gcd(n, p+q, q-p) .
    \]
    As discussed in the previous cases, we have that $\gcd(n, p + q) = \gcd\left(\frac{n}{2}, p+q\right)$ and $\gcd(n, q-p) = \gcd\left(\frac{n}{2}, q-p\right)$. Also, $\gcd\left(\frac{n}{2}, p + q\right) = \frac{n}{2}$ if and only if $p + q = \frac{n}{2}$, while $\gcd\left(\frac{n}{2}, q-p\right) \neq \frac{n}{2}$.
    Besides that, $\gcd\left(\frac{n}{2}, p+q\right)$ and $\gcd\left(\frac{n}{2}, q-p\right)$ are both surely divisors of $\frac{n}{2}$. Taking all of this into consideration, we get that if $p + q \neq \frac{n}{2}$, then
    \[
        \eta(G) \le \frac{1}{2} \cdot \frac{n}{2} + \frac{1}{2} \cdot \frac{n}{2} - \gcd(n, p+q, q-p) = \frac{n}{2} - \gcd(n, p+q, q-p) < \frac{n}{2},
    \]
    while for $p + q = \frac{n}{2}$, we have
    \begin{align*}
        \eta(G) &= \frac{n}{2} + \gcd\left(\frac{n}{2}, q-p\right) - \gcd\left(n, \frac{n}{2}, q-p \right)\\
        &= \frac{n}{2} + \gcd\left(\frac{n}{2}, q-p\right) -  \gcd\left(\frac{n}{2}, q-p\right) = \frac{n}{2} .
    \end{align*}
    Thus, if $8 \nmid n$, then either $\eta(G) < \frac{n}{2}$, or $\eta(G) = \frac{n}{2}$ together with $p + q = \frac{n}{2}$.
    
    Now, suppose that $8 \mid n$ instead. Here, we analogously get that 
    \begin{align*}
        \gcd(n, p + q) &= \gcd\left(\frac{n}{2}, p+q\right) \le \frac{n}{2},\\
        \gcd(n, q - p) &= \gcd\left(\frac{n}{2}, q-p\right) \le \frac{n}{4},
    \end{align*}
    which immediately gives $\eta(G) \le \frac{3n}{4}$. If the equality $\eta(G) = \frac{3n}{4}$ does hold, then it is not diffuclt to see that $p + q = \frac{n}{2}$ and $q - p = \frac{n}{4}$ must be true, which further gives $p = \frac{3n}{8}$ and $q = \frac{n}{8}$.
    
    \bigskip
    Taking everything into consideration, we get that if $8 \nmid n$, then $\eta(G) \le \frac{n}{2}$, with $\eta(G) = \frac{n}{2}$ surely implying $p + q = \frac{n}{2}$. By virtue of Lemma \ref{max_aux_lemma_1}, we see that whenever $p + q = \frac{n}{2}$, the graph nullity must surely be equal to $\frac{n}{2}$, with the null space necessarily being equal to the one given in Eq.\ (\ref{max_aux_lemma_formula}). These observations correspond to the second row in Table \ref{max_nullity_g_table}.

    Finally, if $8 \mid n$, then $\eta(G) \le \frac{3n}{4}$, with $\eta(G) = \frac{3n}{4}$ necessarily implying $p = \frac{n}{8}$ and $q = \frac{3n}{8}$. However, if we do put $p = \frac{n}{8}$ and $q = \frac{3n}{8}$, then the nullity is certainly equal to $\frac{3n}{4}$, with the null space being described via Eq.\ (\ref{max_aux_lemma_formula_3}), by virtue of Lemma~\ref{max_aux_lemma_3}. The noted conclusion is disclosed in the third row in Table \ref{max_nullity_g_table}.
\end{proof}

Finally, we provide the solution to the maximum nullity problem while dealing with connected quartic circulant graphs only. The corresponding result is given in the next theorem.
\begin{theorem}
    For a given positive integer $n \in \mathbb{N},\, n \ge 5$, the maximum nullity that a connected quartic circulant graph of order $n$ can achieve is given in the second column of Table \ref{max_nullity_c_table}. Also, the graphs $\mathrm{Circ}(n, \{p, q\}), \, 1 \le p < q < \frac{n}{2}$, attaining the maximum nullity are characterized by the conditions displayed in the third column, while their corresponding null spaces are disclosed in the fourth column.

\begin{table}[ht]
\begin{center}
{\scriptsize
\begin{tabular}{llll}
\toprule order & $\mathcal{N}_\mathrm{max}$ & graphs attaining $\mathcal{N}_\mathrm{max}$ & corresponding null space \\
\midrule
$n \ge 5, \, 2 \nmid n$ & $0$ & $\gcd(p, q, n) = 1$ & trivial\\
\midrule
$n \ge 6, \, 2 \mid n,\, 8 \nmid n$ & $\frac{1}{2}n$ & $p + q = \frac{n}{2}$ and $\gcd(p, q, n) = 1$ & see Lemma \ref{max_aux_lemma_1}\\
\midrule
$n \ge 8, \, 8 \mid n$ & $\frac{1}{2}n + 2$ & $p + q = \frac{n}{2}$ and $\gcd(p, q, n) = 1$ & see Lemma \ref{max_aux_lemma_2}\\
\bottomrule
\end{tabular}
}
\caption{The complete solution to the maximum nullity problem on $\mathcal{C}_n$.}      
\label{max_nullity_c_table}
\end{center}
\end{table}
\end{theorem}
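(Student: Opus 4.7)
The plan is to mirror the proof of Theorem~\ref{max_nullity_g_th}, splitting the analysis according to the magnitudes of $v_2(p+q)$ and $v_2(q-p)$ relative to $v_2(n)$, and then superimposing the connectivity constraint $\gcd(n,p,q)=1$ supplied by Proposition~\ref{con_property}.

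For $n$ odd, Theorem~\ref{main_theorem} forces $\eta(G)=0$ for every $G=\mathrm{Circ}(n,\{p,q\})$, which combined with the connectivity condition yields the first row. For $n\ge 6$ with $2\mid n$ and $8\nmid n$, Theorem~\ref{max_nullity_g_th} already establishes $\eta(G)\le n/2$ with equality iff $p+q=n/2$, and Lemma~\ref{max_aux_lemma_1} records the null space; the only addition needed is the observation that some such $(p,q)$ gives a connected graph, e.g.\ $(p,q)=(1,n/2-1)$, which yields $\gcd(n,p,q)=1$. This disposes of the second row.

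The substantive case is $8\mid n$. Here $(p,q)=(n/8,3n/8)$, the unique maximizer in $\mathcal{G}_n$, has $\gcd(n,n/8,3n/8)=n/8$, which exceeds $1$ whenever $n>8$; so the graph is disconnected for $n>8$ and the bound must be strengthened. I intend to show that for every connected $G$ with $8\mid n$, $\eta(G)\le n/2+2$, with equality precisely when $p+q=n/2$. Following the four-case split of Theorem~\ref{max_nullity_g_th}: whenever at least one of $v_2(p+q),\,v_2(q-p)$ equals or exceeds $v_2(n)\ge 3$, both $p+q$ and $q-p$ must be even, hence $p$ and $q$ share a parity, and connectedness forces them to be both odd; a short computation using $p+q=(q-p)+2p$ then shows the smaller of $v_2(p+q),\,v_2(q-p)$ equals $1$, bounding the relevant $\gcd$ by $2\cdot(\text{odd part of }n)\le n/4$. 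In the remaining case, where both $v_2(p+q),\,v_2(q-p)<v_2(n)$, the identity $(p+q)\pm(q-p)=2q,\,2p$ gives $\gcd(n,p+q,q-p)\mid\gcd(n,2p,2q)$, and combined with $\gcd(n,p,q)=1$ this forces $\gcd(n,p+q,q-p)\in\{1,2\}$.

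The main obstacle is to verify that in this last case, $p+q\ne n/2$ implies $\eta(G)<n/2+2$. The key elementary fact is that any divisor of $n$ strictly greater than $n/4$ must be $n$, $n/2$ or $n/3$ (as $n/d<4$); for $\gcd(n,p+q)$, the value $n$ is ruled out by $p+q<n$, the value $n/3$ by $v_2(n/3)=v_2(n)$ clashing with $v_2(p+q)<v_2(n)$, and the value $n/2$ by the hypothesis $p+q\ne n/2$, leaving $\gcd(n,p+q)\le n/4$. The same reasoning, together with $q-p<n/2$, gives $\gcd(n,q-p)\le n/4$, so Theorem~\ref{main_theorem} yields $\eta(G)\le n/2<n/2+2$. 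Finally, for $p+q=n/2$, Lemma~\ref{max_aux_lemma_2} directly supplies both $\eta(G)=n/2+2$ and the advertised null space, and the characterization $p+q=n/2$ together with $\gcd(n,p,q)=1$ is correct since $4\mid p+q$ forces $p,q$ of the same parity, hence both odd under connectedness.
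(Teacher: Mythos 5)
Your proposal is correct and follows essentially the same route as the paper: rows one and two are inherited from Theorem~\ref{max_nullity_g_th}, the value $\frac{n}{2}+2$ and its null space for $8 \mid n$ come from Lemma~\ref{max_aux_lemma_2}, and the strict bound when $p+q \neq \frac{n}{2}$ is obtained from Theorem~\ref{main_theorem} by showing that $\gcd(n, p+q)$ and $\gcd(n, q-p)$ are each at most $\frac{n}{4}$. Your additional parity refinement (via connectedness) in the cases where one of $v_2(p+q), v_2(q-p)$ reaches $v_2(n)$ is sound but not needed, since the cruder bound $\eta(G) \le \frac{n}{2}$ already suffices there, exactly as the paper observes.
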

\begin{proof}
    The results given in the first row of Table \ref{max_nullity_c_table} are trivial to notice as a direct consequence of the observations stated in Theorem \ref{max_nullity_g_th}. Also, if $2 \mid n$ but $8 \nmid n$, then we can implement Theorem \ref{max_nullity_g_th} to deduce that among those graphs that maximize the nullity on the set $\mathcal{G}_n$, some are necessarily connected, such as $\mathrm{Circ}\left(n, \left\{1, \frac{n}{2} - 1 \right\}\right)$. For this reason, the maximum nullity on the set $\mathcal{C}_n$ must be the same, i.e.\ $\frac{n}{2}$, and the graphs attaining this nullity are simply those that satisfy $p + q = \frac{n}{2}$ and are connected. These observations justify the results disclosed in the second row of Table~\ref{max_nullity_c_table}. Thus, the only case that needs to be settled is when $8 \mid n$.

    Suppose that $8 \mid n$. Now, Lemma \ref{max_aux_lemma_2} tells us that each connected quartic circulant graph $G = \mathrm{Circ}(n, \{p, q\})$ satisfying $p + q = \frac{n}{2}$ necessarily has the nullity $\frac{n}{2} + 2$ and that all such graphs possess the same null space given via Eq.\ (\ref{max_aux_lemma_formula_2}). Bearing this in mind, in order to complete the proof and show the validity of the third row in Table \ref{max_nullity_c_table}, it is sufficient to demonstrate that $\eta(G) < \frac{n}{2} + 2$ whenever $p + q \neq \frac{n}{2}$. However, this is trivial to accomplish in case $v_2(p + q) \ge v_2(n)$ or $v_2(q - p) \ge v_2(n)$ by using the same technique implemented in the proof of Theorem \ref{max_nullity_g_th}.

    We will now deal with the scenario when $v_2(p + q), v_2(q - p) < v_2(n)$. By virtue of Theorem \ref{main_theorem}, we conclude that $\eta(G) \le \gcd(n, p + q) + \gcd(n, q - p)$. Since $\gcd(n, p + q) \mid \frac{n}{2}$ and $\gcd(n, q - p) \mid \frac{n}{2}$, together with $\gcd(n, p + q) \neq \frac{n}{2}$ and $\gcd(n, q - p) \neq \frac{n}{2}$, due to $p + q \neq \frac{n}{2}$, we get
    \[
        \eta(G) \le \frac{1}{2} \cdot \frac{n}{2} + \frac{1}{2} \cdot \frac{n}{2} = \frac{n}{2} ,
    \]
    which completes the proof.
\end{proof}

\newpage
\section*{Acknowledgements}

The authors would like to thank Toma\v{z} Pisanski for proposing the idea behind the problem to be solved. Furthermore, we would like to express our gratitude to the University of Primorska, University of Ljubljana and In\v{s}titut za matematiko, fiziko in mehaniko for the overall support given throughout the duration of our research.

\end{document}